\title{Localizing motives of Azumaya algebras}
\author{Maxime Ramzi }
\date{}
\newtheorem{thm}{Theorem}[section]
\newtheorem{lm}[thm]{Lemma}
\newtheorem{prop}[thm]{Proposition}
\newtheorem{cor}[thm]{Corollary}
\newtheorem*{thm*}{Theorem}
\newtheorem*{cor*}{Corollary}
\theoremstyle{definition}
\newtheorem{defn}[thm]{Definition}
\newtheorem{cons}[thm]{Construction}
\newtheorem{nota}[thm]{Notation}
\newtheorem{rmk}[thm]{Remark}
\newtheorem{ques}[thm]{Question}
\newtheorem*{ques*}{Question}
\newtheorem*{rmk*}{Remark}
\newtheorem*{ex*}{Example}
\newcommand{\op}{^{\mathrm{op}}}
\newcommand{\cat}{\mathbf}
\newcommand{\Catperf}{\mathrm{Cat}^{\mathrm{perf}}}
\newcommand{\on}{\operatorname}
\newcommand{\Map}{\on{Map}}
\newcommand{\Z}{\mathbb Z}
\newcommand{\Sp}{\cat{Sp}}
\renewcommand{\O}{\mathcal{O}}
\newcommand{\PrL}{\mathrm{Pr}^\mathrm{L} }
\newcommand{\Alg}{\mathrm{Alg}}
\newcommand{\CAlg}{\mathrm{CAlg}}
\newcommand{\Mod}{\cat{Mod}}
\newcommand{\HH}{\mathrm{HH}}
\newcommand{\THH}{\mathrm{THH}}
\newcommand{\Perf}{\mathrm{Perf}}
\newcommand{\pt}{\mathrm{pt}}
\newcommand{\Br}{\mathrm{Br}}
\newcommand{\Pic}{\mathrm{Pic}}
\newcommand{\BBr}{\mathbf{Br}}
\newcommand{\PPic}{\mathbf{Pic}}
\newcommand{\Gm}{\mathbb{G}_m}
\newcommand{\et}{\mathrm{\'et}}
\newcommand{\der}{\mathrm{der}}
\newcommand{\Mot}{\mathrm{Mot}}
\newcommand{\Sh}{\mathrm{Sh}}
\newcommand{\udl}[1]{\underline{#1}}
\newcommand{\QCoh}{\mathrm{QCoh}}
\newcommand{\Aut}{\mathrm{Aut}}
\newcommand{\U}{\mathcal{U}_{\mathrm{loc}}}
\newcommand{\Spec}{\mathrm{Spec}}
\newcommand{\one}{\mathbbm{1}}
\newcommand{\st}{\mathrm{st}}
\newcommand{\dbl}{\mathrm{dbl}}
\newcommand{\Sch}{\mathrm{Sch}}
\newcommand{\qcqs}{{\mathrm{qcqs}}}
\begin{document}

\maketitle
\begin{abstract}
    We prove that the natural map from the derived Brauer group of a qcqs scheme $X$ to the Picard group of $X$-linear motives is injective, extending results of Tabuada and Tabuada-Van den Bergh.
\end{abstract}

\section*{Introduction}
The Brauer group of a scheme $X$, introduced in this generality in \cite{GrBrI}, is a natural invariant of $X$ which, on the one hand, carries étale cohomological information via its description in terms of $H^2_\et(X,\Gm)$, and on the other hand is describable in purely quasi-coherent terms, in a similar fashion to the Picard group of $X$. 

From the latter perspective, the Brauer group of a scheme $X$ is a \emph{categorification} of the Picard group, and in fact, if we also include \emph{derived} Azumaya algebras to get the derived Brauer group, as introduced in \cite{Toen}, it can be defined as some kind of higher Picard group, namely, the Picard group of some category\footnote{Really, $\infty$-category.} of ``quasi-coherent stacks on $X$'', or ``$X$-linear stable $\infty$-categories''. 

On the other hand, there is a natural \emph{de}categorification process for $X$-linear categories, given by $K$-theory. This can be refined by considering, instead of $K$-theory, the universal functor with certain properties that resemble those of $K$-theory, called the \emph{universal localizing invariant}, $\U^X$. This is a symmetric monoidal functor\footnote{As opposed to $K$-theory, which is only lax symmetric monoidal.} $\U^X: \Catperf_X\to \Mot_X$, and thus it carries our invertible $X$-linear categories to invertible ($X$-linear, noncommutative) \emph{motives}. The following natural question arose rather early in the study of localizing motives, as developed by Tabuada:
\begin{ques*}\label{ques:inj}
Is the induced morphism $\Br^\der(X)\to \Pic(\Mot_X)$ injective ? How far is it from being surjective ?
\end{ques*}
Though the latter part of the question appears difficult for now, in \cite[Theorem 9.1]{Tab0}, Tabuada showed that the answer to the first part was yes for fields, namely the map $\Br^\der(k)\to \Pic(\Mot_k)$ is injective.  Later, Tabuada and Van den Bergh made some further progress on the first question, also providing interesting variants thereof. 

We first summarize their joint results about this question as follows: 
\begin{thm*}[{\cite[Theorem 2.1]{TvdB1},\cite[Theorem B.15]{TvdB2}}]

    Let $X$ be a connected\footnote{We only assume this for simplicity of writing.} qcqs scheme. 

    \begin{itemize}
        \item  Suppose $A$ is a classical Azumaya algebra of rank $r$. There is an equivalence of motives $\U^X(A)[\frac{1}{r}]\simeq \U^X(\O_X)[\frac{1}{r}]$; 
        \item  If $X$ is affine and noetherian\footnote{Tabuada and Van den Bergh already knew that the noetherian assumption could likely be dropped. Further, by using a result of de Jong--Gabber instead of the result of Gabber they use, it can be extended to schemes with an ample line bundle.}, then the map $A\mapsto \U^X(A)_\mathbb Q$ distinguishes classical and derived Morita classes of classical and derived Azumaya algebras - that is, if $A$ is a derived Azumaya algebra whose rational motive is equivalent to that of a classical Azumaya algebra, then $A$ is Morita equivalent to a classical Azumaya algebra. 
    \end{itemize}
\end{thm*}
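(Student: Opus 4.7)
For the first bullet, I would exploit the étale-local triviality of classical Azumaya algebras by a transfer argument. A classical Azumaya algebra of rank $r$ is étale-locally Morita equivalent to $\O_X$, and by quasi-compactness one may choose a single finite étale cover $f\colon Y\to X$ realizing this splitting, of some degree $d$ whose prime divisors are among those of $r$. Applying base change of localizing motives along $f$, one gets $f^*\U^X(A)\simeq \U^Y(f^*A)\simeq \U^Y(\O_Y)\simeq f^*\U^X(\O_X)$ in $\Mot_Y$. The essential tool is then a categorified Euler characteristic: for $f$ finite étale of degree $d$, the pullback $f^*\colon\Mot_X\to\Mot_Y$ is ambidextrously adjoint to a pushforward $f_*$, and the unit--counit composite $\id\to f_*f^*\to\id$ on $\Mot_X$ acts as multiplication by $d$. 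Comparing the images of $\U^X(A)$ and $\U^X(\O_X)$ inside $\PPic(\Mot_X)$ so that their ``difference'' makes sense, one concludes that this difference is $d$-torsion, and hence zero after inverting $r$.

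For the second bullet, the plan is to reduce to the structure of $\Br^\der(X)$ combined with a rational injectivity. By the first bullet applied to the classical comparand $A$, we may assume $\U^X(A)_{\mathbb{Q}}\simeq \U^X(\O_X)_{\mathbb{Q}}$, so the hypothesis becomes: $B$ is derived Azumaya with $\U^X(B)_{\mathbb{Q}}\simeq \U^X(\O_X)_{\mathbb{Q}}$. On an affine noetherian scheme (or, using de Jong--Gabber, any scheme with an ample line bundle), Toën's identification $\Br^\der(X)=H^2_\et(X,\Gm)$ together with Gabber exhibits $\Br(X)$ as the torsion subgroup, giving a short exact sequence
$$0 \to \Br(X) \to \Br^\der(X) \to Q(X) \to 0$$
with $Q(X)$ torsion-free. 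It then suffices to show that the composite $Q(X)\to \Pic(\Mot_X)\to \Pic(\Mot_X)_{\mathbb{Q}}$ is injective, since this would force $[B]$ to map to zero in $Q(X)$, and torsion-freeness would imply $[B]\in\Br(X)$.

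The main obstacle will be this rational detection of $Q(X)$. Part (1) is a purely torsion argument and sees nothing of $Q(X)$. To detect $Q(X)$ rationally I would use that $\U^X$ is the \emph{universal} symmetric monoidal localizing invariant, so that every such invariant --- in particular periodic or negative cyclic homology --- factors through it. A genuinely derived Azumaya algebra $B$ represents a non-torsion class in $H^2_\et(X,\Gm)$, and the strategy is to evaluate an appropriate rationalized trace on $B$, identifying the outcome with a Chern-like character sitting in a known piece of Hochschild-theoretic cohomology and showing it is non-zero whenever $[B]\notin \Br(X)$. Establishing this non-vanishing, and thereby closing the torsion-vs-torsion-free dichotomy set up above, is the technical heart of the argument.
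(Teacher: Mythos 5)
This statement is quoted from Tabuada--Van den Bergh and the paper does not reprove it; it simply cites \cite[Theorem~2.1]{TvdB1} and \cite[Theorem~B.15]{TvdB2}. So there is no in-paper proof to compare against, and your proposal must be judged on its own. As it stands, both bullets have genuine gaps.

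For the first bullet, your transfer argument has three independent problems. First, the existence of a \emph{finite} étale cover splitting $A$ whose degree has prime divisors only among those of $r$ is the central simple algebra story over a field; over a general qcqs scheme you only get a quasi-compact étale cover by pasting local splittings, and it is not clear (and I believe not true in general) that one can replace it by a finite étale one of controlled degree. Second, for a finite étale $f\colon Y\to X$ of degree $d$ the unit--counit composite $\id\to f_*f^*\to\id$ on $\Mot_X$ is \emph{not} multiplication by the integer $d$: evaluated on the unit it is multiplication by $[f_*\O_Y]\in K_0(X)=\End_{\Mot_X}(\one)$, which is a rank-$d$ class but generally differs from $d\cdot[\O_X]$ by an element of the augmentation ideal that need not be nilpotent for an arbitrary qcqs $X$. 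Third, even granting that the difference class $L=\U^X(A)\otimes\U^X(\O_X)^{-1}$ is $d$-torsion in $\Pic(\Mot_X)$, that is multiplicative torsion, and the theorem asserts an equivalence of motives after \emph{additively} inverting $r$; passing from one to the other is not automatic. Tabuada--Van den Bergh avoid all of this by working directly with the Morita bimodules: the exact $\Perf(X)$-linear functors $F=-\otimes_{\O_X}A\colon\Perf(X)\to\Perf(A)$ and the forgetful $G\colon\Perf(A)\to\Perf(X)$ give maps $\one\to\U^X(A)\to\one$ and back whose composites are controlled by $r$-related classes, so that inverting $r$ makes each a retract of the other; no descent along a cover is needed.

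For the second bullet, your reduction of the problem to showing that the map $Q(X)=\Br^{\der}(X)/\Br(X)\to\Pic(\Mot_X)_{\mathbb Q}$ is injective is a reasonable reformulation, but you explicitly leave the ``technical heart''--- the non-vanishing of a rational invariant on non-torsion derived Brauer classes--- unproved, and the Chern-character/cyclic-homology trace you gesture at is exactly the step that would need a real construction. It is also not what the cited proof does. As the paper itself records in \Cref{lm:Tvdbnonoeth} and \Cref{lm:thomason}, the actual argument in \cite[Theorem~B.15]{TvdB2} runs in the opposite direction: from the rational motivic equivalence one extracts that $A$, viewed as a perfect $\O_X$-module, has nowhere-vanishing rank; by \cite[Lemma~B.16]{TvdB2} (or Thomason's classification of thick $\otimes$-ideals) such an $A$ generates, and then \cite[Theorem~B.11/B.12]{TvdB2} shows the corresponding Brauer class is torsion; finally, in the affine noetherian (or ample line bundle) case, Gabber's theorem identifies torsion classes in $H^2_\et(X,\Gm)$ with classes of classical Azumaya algebras. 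So the substance is a rank-and-support argument, not a rational detection statement, and that substance is missing from your sketch.
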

In fact, Tabuada went further in the direction of injectivity. Indeed, he considers a reasonable stable subcategory $\Mot_X^{\mathrm{sm.p}}\subset \Mot_X$ containing among other things all motives of Azumaya algebras, but also all motives of smooth proper schemes over $X$, and consider the image of the motives of Azumaya algebras in $K_0(\Mot_X^{\mathrm{sm.p}})$ which is (almost\footnote{In fact, Tabuada proves something about \emph{additive} or \emph{splitting} motives. For fields, and restricting to motives of smooth and proper categories, the functor from additive to localizing motives is conjecturally an equivalence.}) Toën's secondary $K$-theory group $K_0^{(2)}(X)$. By essentially reducing to fields and performing an explicit analysis there, Tabuada proves:
\begin{thm*}[{\cite[Theorem 1.3]{Tab2}}]
    Let $X$ be a regular, integral qcqs scheme over a field. The natural map $\Br^\der(X)\to K_0^{(2)}(X)$\footnote{or more properly, $GL_1(K_0^{(2)}(X))$.} from the derived Brauer group of $X$ is injective. 
\end{thm*}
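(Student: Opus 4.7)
I would follow Tabuada's general template: reduce the claim to the case where $X$ is the spectrum of a field, and then deal with that case by explicit analysis of motives of central simple algebras.

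\textbf{Reduction to a field.} Let $\eta = \Spec K$ be the generic point of $X$. Base change along $\eta \hookrightarrow X$ is a symmetric monoidal functor $\Catperf_X \to \Catperf_K$, hence induces a commutative square
\begin{equation*}
\begin{tikzcd}
\Br^\der(X) \arrow[r] \arrow[d] & K_0^{(2)}(X) \arrow[d] \\
\Br^\der(K) \arrow[r] & K_0^{(2)}(K).
\end{tikzcd}
\end{equation*}
Under the hypotheses the left vertical is injective: $X$ being regular integral qcqs, $\pi_1^\et(X)$ is profinite so $H^1_\et(X, \Z) = 0$, and therefore $\Br^\der(X)$ coincides with the cohomological Brauer group $H^2_\et(X, \Gm)$; the latter injects into $H^2_\et(K, \Gm) = \Br^\der(K)$ by Auslander--Goldman purity for regular integral schemes. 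A diagram chase then reduces the problem to the case $X = \Spec k$.

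\textbf{The field case.} Over a field $k$ we have $\Br^\der(k) = \Br(k)$, and every class is represented by a central division algebra $D$ with $\dim_k D = d^2$. The first theorem cited in the excerpt already gives $[\U^k(D)] = [\U^k(k)]$ after inverting $d$, so any potential difference lives in the $d$-primary torsion of $K_0^{(2)}(k)$. To refine this, I would pick a finite Galois splitting field $L/k$ of $D$ and exploit the base-change/transfer pair between $K_0^{(2)}(k)$ and $K_0^{(2)}(L)$, which should satisfy a projection formula: base change sends $[\U^k(D)]$ to $[\U^L(L)]$ (by Morita invariance applied to $D \otimes_k L \simeq M_d(L)$), and composed with transfer this yields multiplication by $[L:k]$. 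The heart of the argument is then to show that a Galois-equivariant invariant extracted from the cyclic presentation of $D$ detects that this $d$-torsion class is actually nonzero whenever $d > 1$.

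\textbf{Main obstacle.} The reduction step is essentially formal, given functoriality of $\U^{(-)}$ and the classical purity injection $\Br(X) \hookrightarrow \Br(K)$ for regular integral schemes. The genuine difficulty is the field case: $K_0^{(2)}(k)$ is not easy to describe, and one must control $[\U^k(D)]$ precisely enough to distinguish it from $[\U^k(k)]$. This is where Tabuada's explicit computation with additive/splitting motives enters, and porting it to the localizing setting requires the comparison between additive and localizing $K_0$-classes for smooth proper $k$-linear categories alluded to in footnote 5 of the excerpt.
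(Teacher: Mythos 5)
The theorem you are proving is quoted in the paper from Tabuada's work (\cite[Theorem~1.3]{Tab2}); the paper does not reprove it, and the only ``proof'' it offers is the one-line description ``by essentially reducing to fields and performing an explicit analysis there,'' together with the remark that the reduction-to-the-generic-point step works for any regular separated noetherian scheme. Your reduction step is exactly this step, and it is carried out correctly: for $X$ regular and integral, $H^1_\et(X,\Z)=\mathrm{Hom}_\mathrm{cont}(\pi_1^\et(X),\Z)=0$, so $\Br^\der(X)=H^2_\et(X,\Gm)$, which injects into $\Br(K)$ by purity; functoriality of both $\Br^\der$ and $K_0^{(2)}$ under base change then gives the diagram chase you describe. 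So far this is the same route as Tabuada's, and the paper's remark confirms it.

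The gap is the field case, which you explicitly flag as ``the genuine difficulty'' but do not actually prove. What you write there is a plausible-sounding heuristic (the rank-inversion result leaves only $d$-primary torsion, so one hopes to detect it via Galois transfers and a projection formula on $K_0^{(2)}$), but none of these ingredients are established in your sketch: you do not construct transfers for $K_0^{(2)}$, do not prove a projection formula, and do not produce the ``Galois-equivariant invariant'' that would separate $[\U^k(D)]$ from $[\U^k(k)]$. Since the paper itself stresses that ``the key special case is that of fields,'' a proof attempt that outsources precisely this case back to Tabuada has not proved the theorem; it has only reproduced the (correct, but easy) reduction. You are also right to note the additive-versus-localizing subtlety from footnote~5: Tabuada's computation is on the additive side, and the identification with $K_0$ of localizing motives of smooth proper categories is conjectural, so even a faithful port of Tabuada's field-case argument would need to address this comparison before the statement, as written here, is fully justified.
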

\begin{rmk*}
    The reduction to fields actually works more generally: for any regular separated noetherian scheme $X$, restriction to generic points induces an injection on Brauer groups. In any case, in both situations the key special case is that of fields. 
\end{rmk*}
In this work and its prequel \cite{Tab1}, Tabuada also draws more precise conclusions and extends this result a bit beyond the case described here. In particular, the above theorem certainly guarantees that for such $X$, the map $\Br^\der(X)\to \Pic(\Mot_X)$ is injective (and is in fact stronger). 

Our goal in this short paper and its companion \cite{companion} is to pursue this line of investigation. Our basic insight in both papers is the same, though the technicalities involved in pushing it through are of drastically different nature, and in particular the present paper is to be thought of as much softer and more elementary than the companion. The reader should feel free to read it as a light introduction to the latter. 

In this paper, we concern ourselves with the injectivity question mentioned earlier, and do not attempt to say anything about secondary $K$-theory, or about surjectivity. We essentially observe that the natural map $\Br^\der(X)\to \Pic(\Mot_X)$ is, on a large subgroup of $\Br^\der(X)$, split by a natural and simple construction: that of the \emph{universal determinant} for the Azumaya algebra $A$, which we observe only depends on the (noncommutative, or localizing) motive of $A$. This method already buys us injectivity on a large subgroup of the derived Brauer group of $X$, namely $H^2_\et(X,\Gm)$ (in fact, it buys us a bit more than this, but this is too technical for the purposes of this introduction).  

However, if we combine our determinant approach with (a variation of) the aforementioned \cite[Theorem B.15]{TvdB2} to obtain a full injectivity statement: 
\begin{thm*}
    Let $X$ be a qcqs scheme. The map $$\Br^\der(X)\to\Pic(\Mot_X)$$ is injective. 
\end{thm*}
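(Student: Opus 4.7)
The plan is to combine the two main ingredients developed in the paper: the universal determinant construction, which furnishes a splitting of $\Br^\der(X) \to \Pic(\Mot_X)$ on the cohomological subgroup $H^2_\et(X,\Gm) \subset \Br^\der(X)$, and a variant of \cite[Theorem B.15]{TvdB2} extended from the noetherian affine setting to a general qcqs setting. The overall strategy is: first use the TvdB2 variant to reduce an arbitrary derived Azumaya algebra to a classical one, and then apply the determinant to conclude in the classical case.

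I assume $X$ is connected (the general case follows component-wise) and let $A$ be a derived Azumaya algebra on $X$ with $\U^X(A) \simeq \U^X(\O_X)$ in $\Mot_X$; the aim is to show that $A$ is derived Morita equivalent to $\O_X$. First, I would apply the extended Tabuada--Van den Bergh theorem: since rationally $\U^X(A)_\mathbb Q \simeq \U^X(\O_X)_\mathbb Q$ and $\O_X$ is itself a classical Azumaya algebra, the theorem guarantees that $A$ is derived Morita equivalent to some classical Azumaya algebra $B$. This reduces matters to showing that the class $[B]$, which now lives in the classical Brauer group $\Br(X) \subset H^2_\et(X,\Gm)$, vanishes. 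Next, I would invoke the universal determinant: since by construction it factors through $\Mot_X$ and recovers the cohomological class in $H^2_\et(X,\Gm)$, the equivalence $\U^X(B) \simeq \U^X(A) \simeq \U^X(\O_X)$ forces $[B] = 0$, and hence $[A] = 0$.

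The main obstacle is the extension of Theorem B.15 of \cite{TvdB2} to arbitrary qcqs schemes. Following the hint in the introduction, I would first replace Gabber's theorem by the de Jong--Gabber strengthening to handle schemes with an ample line bundle (covering, in particular, any affine scheme), and then globalize to arbitrary qcqs $X$ by a suitable descent argument, paying close attention to how classical versus derived Morita equivalence interacts with the descent procedure. The determinant-based injectivity on the cohomological subgroup is used here as a black box coming from the earlier sections of the paper; the novelty of this final statement lies in combining it with the enhanced reduction step above.
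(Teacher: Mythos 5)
Your overall architecture is the right one — reduce to the $H^2_\et(X,\Gm)$ part of $\Br^\der(X)$ via (a variant of) Tabuada--Van den Bergh, then finish with the determinant retraction (\Cref{cor:restinj}) — but there is a genuine gap in your reduction step, and it concerns precisely the point you wave at with ``a suitable descent argument.''

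You want to conclude that $A$ is Morita equivalent to a \emph{classical} Azumaya algebra. In \cite{TvdB2} this step relies on Gabber's theorem that torsion classes in $H^2_\et(X,\Gm)$ are represented by classical Azumaya algebras, and this is precisely what fails for general qcqs schemes: de Jong--Gabber extend it to schemes with an ample line bundle, but Grothendieck's non-separated surface gives a genuine counterexample beyond that. There is no descent argument rescuing it, because étale-locally \emph{every} derived Azumaya algebra is trivial (hence classical), so ``étale-locally classical'' is no constraint; the obstruction to global classicality is exactly the nontrivial content of the Gabber/de Jong theorems, and it does not vanish for arbitrary qcqs $X$. So as written, your reduction step would only prove the theorem for $X$ with an ample line bundle.

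The paper's fix is to observe that you do not actually need classicality, only membership in $H^2_\et(X,\Gm)\subset\Br^\der(X)$, since \Cref{cor:restinj} applies to any class there, classical or not. One gets into $H^2_\et(X,\Gm)$ as follows: \Cref{lm:thomason} (a qcqs version of the generation-by-support argument, replacing \cite[Lemma B.16]{TvdB2}) lets the TvdB argument run up to the point where $A$ is Morita equivalent to $B$ of nowhere-vanishing rank; then \cite[Theorem B.11]{TvdB2} — which needs no affineness — gives that $[B]$ is \emph{torsion} in $\Br^\der(X)$; and since $H^1_\et(X,\Z)$ is torsion-free, the image of $[A]$ in $H^1_\et(X,\Z)$ vanishes, forcing $[A]\in H^2_\et(X,\Gm)$. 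Replacing your classicality step by this ``torsion plus torsion-freeness of $H^1_\et(X,\Z)$'' step is the missing idea.

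Two smaller remarks. First, the claim that ``the determinant recovers the cohomological class in $H^2_\et(X,\Gm)$'' is slightly stronger than what the paper establishes: \Cref{cor:restinj} gives injectivity of the composite on that subgroup, proved via a lifting/vanishing argument in $\pi_0\Gamma_\et(X,B2\PPic)\cong H^1_\et(X;2\Z)\times H^2_\et(X;\Gm)$, not a literal retraction onto $H^2_\et(X,\Gm)$; but for your purposes injectivity is all you use, so this is phrasing rather than a gap. Second, connectedness is not needed and the paper does not assume it; the argument is already sheaf-theoretic.
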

\begin{rmk*}
    The following can also be deduced from the work of Tabuada, but we point it out explicitly nonetheless: the above theorem implies that if $A$ is a classical, non-$2$-torsion Azumaya algebra, $\U^X(A)\not\simeq \U^X(A\op)$. This is interesting since many localizing invariants are $(-)\op$-invariant, such as $K$-theory and the usual variants of $\THH/\HH_k$. 
\end{rmk*}

Our method of proof is different from that of \cite{Tab1}, and instead relies on a local analysis of the map $\Br^\der(X)\to \Pic(\Mot_X)$. To make this local analysis, it is crucial to view $\Br^\der$ as only $\pi_0$ of a more refined Brauer \emph{space} or spectrum, $\BBr^\der$. This spectrum also contains the information of $\Pic(X)$ and $\Gm(X)$, and these are glued together in some possibly interesting way, but more importantly for us, the results of Toën \cite{Toen} guarantee that $\BBr^\der$ can be étale-locally completely understood from $\PPic$.  Similarly, $\Pic(\Mot_X)$ is only $\pi_0$ of the Picard space or spectrum, $\PPic(\Mot_X)$.

The $H^2_\et(X,\Gm)$ part of the derived Brauer group arises from a canonical map $$B^2\Gm\to \BBr^\der$$ of presheaves of spectra, which is not far from an equivalence upon (étale) sheafification of the source, and this allows us to understand the local behaviour of the map $\BBr^\der(-)\to~\PPic(\Mot_{-})$ from the behaviour of the much simpler map $B^2\Gm(-)\to \PPic(\Mot_{-})$, for which we essentially construct a retraction. 
\begin{ex*}
Even in the case of a field, our approach is different from Tabuada's. This special case is particularly interesting because of its elementary nature: our method boils down to the observation that for a central simple algebra $D$ over a field $k$, one can recover the Morita equivalence class of $D$ from the knowledge of $K_{\leq 1}(k^\mathrm{sep}\otimes_k D)$ together with its $\mathrm{Gal}(k^\mathrm{sep}/k)$-action. 
\end{ex*}

\subsection*{Related work}
As already mentioned in the body of the introduction, this work follows the investigations of \cite{TvdB1,TvdB2,Tab0,Tab1,Tab2} and owes them a clear intellectual debt. The reader is invited to go through these papers for their comprehensive lists of examples and further detailed information. 

Our approach using the twisted determinant of an Azumaya algebra is closely related to the work of Kahn-Levine \cite{kahnlevine}, and in fact our $\PPic_A$ is a less-truncated version of their $\mathcal K_0^A$ or $\mathbb Z_A$ (though we should be careful about relating their $\mathcal K_1^A$ and the fiber from \Cref{rmk:otherapproach}). As pointed out to us by Ben Antieau (and explained in \Cref{rmk:otherapproach}), (part of) our $\PPic_A(X)$ also has a precedent in terms of Severi--Brauer varieties when $A$ is a classical Azumaya algebra, as the subgroupoid $\PPic^{\heartsuit,1}(P)$ of line bundles of degree $1$ on the Severi--Brauer scheme associated to $A$. 

Finally, let us discuss briefly the contents of the companion paper \cite{companion}, where we focus on the $p$-primary torsion part of the Brauer group, for some prime $p$ invertible in the base. Our approach there is similar in the sense that it builds on an investigation of the local behaviour of certain Brauer and Picard (pre)sheaves. However, there we exploit a potentially more destructive operation on Azumaya algebras: we consider their $K(1)$-local $K$-theory. On the one hand, this is a more destructive operation, but on the other hand, this defines an exact functor on the category of motives, and is therefore more likely to be able to say anything about secondary $K$-theory. Our results there are of slightly different nature, since generally speaking, taking the bare spectrum ``$L_{K(1)}K(A)$'' \emph{does} lose too much information - what we observe there is that remembering a reasonable amount of extra information actually is sufficient to recover the Brauer class of $A$ under some assumptions. 

\subsection*{Conventions}
We use the language of $\infty$-categories as extensively developed in \cite{HTT,HA}, though we drop the ``$\infty$-'' and simply refer to them as categories, and say ``$1$-category'' for ordinary categories when the need arises.

Our Azumaya algebras and quasicoherent sheaves are typically implicitly derived, so that $\QCoh,\PPic$ etc. by default refer to the derived versions of quasicoherent sheaves and line bundles. However our schemes are classical and qcqs.   
\subsection*{Acknowledgements}
I am grateful to Ben Antieau for his enthusiasm regarding this project, and a number of helpful comments (including an initial discussion about a year ago which sparked my interest in this question). I also had interesting conversations related to this project with Mura Yakerson and Denis-Charles Cisinski.

This paper owes an obvious intellectual debt to the work of Tabuada on motives; it was also sparked by an inspiring talk given by Tabuada in the conference ``Recent developments in algebraic $K$-theory'' organized at the University of Warwick, and I am grateful to the organizers, Rudradip Biswas and Marco Schlichting, for putting together this event. 

This research was funded by the Deutsche Forschungsgemeinschaft (DFG, German Research Foundation) – Project-ID 427320536 – SFB 1442, as well as by Germany’s Excellence Strategy EXC 2044 390685587, Mathematics Münster: Dynamics–Geometry–Structure. 
\section{Preliminaries}
In this short section, we gather a few definitions and facts about (noncommutative or localizing) motives, and about Brauer and Picard sheaves. 
\subsection{$X$-linear motives}
\begin{defn}
    Let $X$ be a qcqs scheme. We let $\Catperf_X$ denote $\Mod_{\Perf(X)}(\Catperf)$, the category of modules over $\Perf(X)$ in $\Catperf$, the category of stable categories. We also call them ``$X$-linear categories''. 

    By \cite{BGT} in the absolute case, or for example \cite[Section 5]{HSS} and \cite[Appendix B]{RSW} in the relative case, there is an associated stable, presentably symmetric monoidal category $\Mot_X$ of $X$-linear (localizing\footnote{We shall not consider any other kind of motive in this paper, and therefore drop the word ``localizing'' throughout.}) motives. We let $\U^X: \Catperf_X \to \Mot_X$ denote the associated universal finitary localizing invariant. 
\end{defn}
\begin{nota}
We shall often abuse notation and, for a quasicoherent derived algebra $A\in\Alg(\QCoh(X))$, write $\U^X(A)$ in place of $\U^X(\Perf(A))$.  
\end{nota}
Recall that by \cite[Theorem 5.15]{HSS}, (connective) $K$-theory, for $X$-linear categories, can be described as the mapping space $\Map_{\Mot_X}(\U^X(\Perf(X)), \U^X(-))$ - the only thing we will truly need to know here is that (connective) $K$-theory of an $X$-linear category depends functorially on its motive\footnote{The precise representability result uses that all objects in $\Perf(X)$ are dualizable, whereas the latter fact uses essentially nothing.}. 

More generally, any motive $M\in\Mot_X$ induces a (finitary) localizing invariant $K_M:= K(M\otimes_{\Perf(X)}-)$, which in turn induces a Nisnevich sheaf $$U\mapsto K_M(\Perf(U))$$ on $\Sch^\qcqs_{/X}$. 
\begin{nota}
    We let $\mathcal K_M$ denote the sheaf described above. 
\end{nota}

If we consider connective $K$-theory, this is a Nisnevich sheaf of spaces or $\mathbb E_\infty$-groups. 
\subsection{Brauer and Picard sheaves}
\begin{nota}
For a symmetric monoidal category $C$, we let $\PPic(C)$ denote the full subspace of $C^\simeq$ spanned by $\otimes$-invertible objects, and $\Pic(C):= \pi_0\PPic(C)$. 

If $X$ is a qcqs scheme, we let $\PPic(X):=\PPic(\Perf(X))$ and similarly for $\Pic$. 

We let $\BBr(X) := \PPic(\Catperf_X)$ and $\Br(X) := \pi_0\BBr(X)$. By \cite{Toen}\footnote{See also \cite{AntieauGepner} for the variant where $X$ is a connective spectral scheme.}, this is exactly the group of derived Azumaya algebras over $X$ up to Morita equivalence. 
\end{nota}
\begin{rmk}
    From now on, all our Brauer groups/Azumaya algebras and line bundles are implicitly derived unless explicitly stated, so we removed it from the notation. 
\end{rmk}
\begin{rmk}
We have a canonical equivalence $\Omega(\BBr(X),\Perf(X))\simeq  \Aut_{\Perf(X)}(\Perf(X)) \simeq \PPic(X)$, as well as a similar canonical equivalence $\Omega(\PPic(X),\O_X)\simeq \Gm(X)$. Thus $\BBr(X)$ is a $2$-truncated spectrum, with $\pi_0$ given by the Brauer group, $\pi_1$ the Picard group, and $\pi_2$ the group of units, each of which being a categorification of the next.  
\end{rmk}
The main results of \cite{Toen} are that $U\mapsto \BBr(U)$ is an étale sheaf of spectra, and that any (derived) Azumaya algebra is étale locally trivial, and hence the map of sheaves $\pt\to \BBr(-)$ classifying the trivial Azumaya algebra is an effective epimorphism of étale sheaves on any qcqs scheme $X$. It follows that, as étale sheaves, we have an equivalence $$B_\et\PPic\simeq \BBr$$ 

Now any classical line bundle is Zariski and hence étale locally trivial, but $\Pic(X)$ also contains suspensions of the unit, which are not. It follows by considering stalks that the sheafification of $U\mapsto \Pic(U)$ is $\udl{\Z}$, the constant sheaf with value $\Z$. The truncation map $\PPic\to \Pic$ thus induces a map of sheaves $\BBr\simeq B_\et\PPic\to B_\et\udl\Z$.

\begin{nota}
    We let $r: \Br(X)\to H^1_\et(X;\Z)$ denote the map induced on $\pi_0$ of global sections.
\end{nota}
\section{Twisted determinants}
In this section, we prove our main result by introducing, for a derived Azumaya algebra $A$, a twisted form of the Picard sheaf $\PPic$, which we denote $\PPic_A$. 

\begin{nota}
    Throughout this section, we will fix a qcqs scheme $X$. All (pre)sheaves and related notions (such as sheafification) are to be interpreted with respect to $X_\et$, the small étale site of $X$. 
\end{nota}

To motivate our definition, recall the following: 
\begin{prop}\label{prop:Pic=K}
   The determinant map $\det: K_{[0,1]}(U)\to\PPic(U)$, natural in $U$, induces an equivalence $L_\et K_{[0,1]}\to \PPic$ of étale sheaves of connective spectra on $X$.
\end{prop}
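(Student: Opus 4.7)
The plan is to verify the map on étale stalks and exploit that both sides are étale sheaves of connective spectra, so checking equivalence reduces to a local computation on $\pi_0$ and $\pi_1$. The target $\PPic$ is an étale sheaf because $\Perf(-)$ satisfies étale descent as a presheaf of symmetric monoidal stable categories, and hence so does its subfunctor of invertible objects; the source $L_\et K_{[0,1]}$ is a sheaf by construction. Therefore it suffices to show that for every geometric point $\bar x\to X$, the induced map on stalks is an equivalence. Since truncation and $\PPic$ both commute with the filtered colimit defining the stalk, this amounts to checking that for a strictly henselian local ring $A$, the determinant $K_{[0,1]}(A)\to \PPic(A)$ is an equivalence of $1$-truncated connective spectra.

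Concretely, I would compute both sides on such $A$:
\begin{itemize}
\item $\pi_0 K(A)=\mathbb Z$ via the rank (every projective is free over a local ring), and $\pi_1 K(A)=A^\times$ via the classical Bass--Whitehead identification $K_1(A)\cong A^\times$ for commutative local rings;
\item $\pi_0\PPic(A)=\mathbb Z$, since over a local ring the only $\otimes$-invertible objects of $\Perf(A)$ are the shifts $\Sigma^n A$, classical $\Pic$ being trivial; and $\pi_1\PPic(A)=\Aut_{\Perf(A)}(A)=A^\times$.
\end{itemize}
By construction of $\det$, the induced map on $\pi_1$ is precisely the classical determinant $K_1(A)\to A^\times$, which is the Bass--Whitehead isomorphism, and on $\pi_0$ it sends $[\Sigma^n A]\mapsto [\Sigma^n A]$, hence is the identity of $\mathbb Z$. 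Both are isomorphisms.

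A standard long-exact-sequence argument then shows that a natural map between $1$-truncated connective spectra that is an isomorphism on $\pi_0$ and $\pi_1$ is automatically an equivalence (the cofiber's nonvanishing homotopy groups are forced to lie in degrees $\leq 1$, where they are killed by the isos). Thus $\det$ is an equivalence on every stalk, hence an equivalence of étale sheaves.

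The main potential obstacle is bookkeeping: one should make sure that (i) the filtered colimit of étale neighborhoods really commutes with $\PPic$ (it does, because $\Perf$ preserves filtered colimits of étale extensions and detecting invertibility is a condition that behaves well under such colimits), and (ii) that $\det$ agrees on $\pi_1$ with the classical Bass--Whitehead map; the latter is essentially tautological from the definition of $\det$ as the universal map from $K$-theory to the $1$-truncated spectrum $\PPic$. Neither step should require more than a routine check.
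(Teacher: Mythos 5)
Your proof is correct and takes essentially the same approach as the paper: reduce to stalks and compute $\pi_0$ (via the rank, using that projectives over local rings are free) and $\pi_1$ (via the Bass--Whitehead determinant $K_1(R)\cong R^\times$). The only cosmetic difference is that the paper observes one can reduce to Zariski stalks (arbitrary local rings) by exploiting that $\PPic$ is already an étale sheaf, whereas you pass directly to étale stalks (strictly henselian local rings); the stalkwise computation is identical in both cases.
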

Here, $L_\et$ denotes étale sheafification (in connective spectra) and $K_{[0,1]}$ denotes the Postnikov section of $K$ in degrees $[0,1]$. 

Before proving this proposition, let us recall what we mean here by ``determinant map''. In \cite[Chapter I]{Det}, Knudsen and Mumford construct, for a commutative ring $R$ with connected $\Spec(R)$ (we only say this for simplicity), a symmetric monoidal functor $\det: (\mathrm{Proj}_R^\simeq,\oplus)\to \PPic(\mathrm{GrProj}_R)$, where $\mathrm{GrProj}_R$ is the ($1$-)category of graded projective $R$-modules with the usual graded tensor product and the Koszul sign rule, which sends a projective module $P$ to $\Lambda^{\mathrm{rk}(P)} P[\mathrm{rk}(P)]$ (see \textit{loc. cit.} for how to deal with the case where $\Spec(R)$ is not connected), where $M[n]$ means $M$ in grading $n$.  Now there is a strong symmetric monoidal functor of $1$-categories $\mathrm{GrProj}_R\to \mathrm{Ch}(\mathrm{Proj}_R)$ interpreting graded projective $R$-modules as chain complexes with trivial differential, and finally a symmetric monoidal functor $\mathrm{Ch}(\mathrm{Proj}_R)\to D(R)$ from the $1$-category of chain complexes of projective $R$-modules to the derived $\infty$-category of $R$. Together, these compose to a functor $\det: (\mathrm{Proj}_R^\simeq,\oplus)\to \PPic(R)$. 

Since the target is a grouplike $\mathbb E_\infty$-monoid, this factors through the group completion of the source, which is $K(R)$, and altogether we get a map $\tau_{[0,1]}K(R)\to \PPic(R)$, which we dub ``the determinant''. Since $\PPic$ is a sheaf on schemes, this also globalizes to schemes\footnote{By \cite[Example A.0.6]{EHKSY}, connective $K$-theory of animated commutative rings is left Kan extended from smooth rings, and thus there is also a determinant for animated rings and more generally derived schemes; but it is not expected to extend to $\mathbb E_\infty$-$\mathbb Z$-algebras in any reasonable way.}, and this is what the above proposition refers to. 

\begin{proof}
Since $\PPic$ is already a sheaf for the étale topology, it suffices to prove the same claim for the Zariski sheafification of $K_{[0,1]}$.

It therefore suffices to check on stalks for the Zariski topology, that is, local rings. Since both $K$-theory and the Picard space are finitary, this means checking that for a local commutative ring $R$, $\det_R : K_{[0,1]}(R)\to \PPic(R)$ is an equivalence. This is classical: for $\pi_0$ it follows from the fact that every projective module over a local ring is free, and for $\pi_1$ it follows from the fact that $\det_R : K_1(R)\cong R^\times$ for local rings (see e.g. \cite[Lemma III.1.4]{weibel}. 
\end{proof}
Consequently, $\PPic$ admits the structure of a sheaf of commutative ring spectra, and we will implicitly consider it as such.

With this in mind, we introduce: 
\begin{defn}
    Let $C$ be an $X$-linear category, i.e. a $\Perf(X)$-module in $\Catperf$. We let $\PPic_C$ denote the étale sheafification of $$U\mapsto \tau_{[0,1]}K(C\otimes_{\Perf(X)}\Perf(U))$$
    i.e. the étale sheafified $1$-truncation of $\mathcal K_C$. 
    It has a canonical $\PPic$-module structure in étale sheaves of connective spectra on $X$. 

    More generally, $C\mapsto \PPic_C$ extends to a functor\footnote{The reader should note that this is \emph{not} an exact functor.} $\Mot_X\to \Mod_{\PPic}(\Sh_\et(X; \Sp_{\geq 0}))$. 

    If $A$ is a sheaf of (derived) algebras over $X$, we let $\PPic_A$ abbreviate $\PPic_{\Perf(A)}$.
\end{defn}
\begin{nota}
    For $C$ an $X$-linear motive, we let $\Pic_C$ denote $\pi_0^\et(\PPic_C)$, that is, the étale sheafified $\pi_0$ of $\PPic_C$. 
\end{nota}
Our goal is to show that one can in fact (almost) recover $A$ (up to Morita equivalence) from $\PPic_A$, when $A$ is an Azumaya algebra. To prove this, we will interpret twisted forms of $\PPic$ in cohomological terms. 

Before doing so, we need to identify what happens locally. 
\begin{lm}\label{lm:formofPic}
    Let $A$ be an Azumaya algebra over $X$. $\PPic_A$ is étale-locally equivalent to $\PPic$ as a sheaf of $\PPic$-module spectra. 
\end{lm}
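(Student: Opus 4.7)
The plan is to combine Toën's local triviality theorem with \Cref{prop:Pic=K}. By \cite{Toen}, the map $\pt \to \BBr$ is an effective epimorphism of étale sheaves, so I can choose an étale cover $\{U_i \to X\}$ such that the class of $A$ in $\BBr(U_i)$ is trivial; equivalently, there are equivalences $\Perf(A) \otimes_{\Perf(X)} \Perf(U_i) \simeq \Perf(U_i)$ in $\Catperf_{U_i}$. My strategy is to pull these trivializations through the presheaf definition of $\PPic_A$ and invoke \Cref{prop:Pic=K}.

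Fix one such $U_i$. For every étale map $V \to U_i$, base-changing the chosen trivialization along $V \to U_i$ yields a natural equivalence of $\Perf(V)$-linear categories $\Perf(A) \otimes_{\Perf(X)} \Perf(V) \simeq \Perf(V)$. Applying $K$-theory, which is lax symmetric monoidal and in particular respects $\Perf(V)$-linearity, produces a natural equivalence of $K(\Perf(V))$-modules $\mathcal K_A(V) \simeq K(\Perf(V))$. Taking $\tau_{[0,1]}$ (which is compatible with $\mathbb E_\infty$-ring and module structures on connective spectra) and then étale-sheafifying gives an equivalence $\PPic_A|_{U_i} \simeq L_\et \tau_{[0,1]} \mathcal K|_{U_i}$, and by \Cref{prop:Pic=K} the right hand side is precisely $\PPic|_{U_i}$.

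The only care needed is to ensure the resulting equivalence is genuinely $\PPic$-linear rather than just an underlying equivalence of sheaves of connective spectra. This is a naturality/functoriality check: the $\PPic$-module structure on $\PPic_A$ is built from the $K$-theoretic shadow of the $\Perf(V)$-action on $\Perf(A) \otimes_{\Perf(X)} \Perf(V)$, and the chosen Morita trivialization is $\Perf(U_i)$-linear, hence its base change to each $V$ is $\Perf(V)$-linear; $K$-theory, truncation, and étale sheafification each preserve these module structures. I do not expect any genuine obstacle beyond this bookkeeping — the substance of the lemma is really concentrated in Toën's effective-epimorphism theorem and \Cref{prop:Pic=K}, and the present statement is the natural sheaf-theoretic packaging of ``Morita trivial $\Rightarrow$ twisted Picard is ordinary Picard''.
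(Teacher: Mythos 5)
Your proof is correct and follows essentially the same route as the paper: the paper's one-line argument simply cites Toën's local triviality of Azumaya algebras (\cite[Proposition 1.14]{Toen}, i.e.\ that $\Perf(A)$ is étale-locally equivalent to $\Perf(X)$), which is exactly the input you use, and the rest of your write-up is just spelling out the bookkeeping that the paper leaves implicit.
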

\begin{proof}
    This follows from the fact that $\Perf(A)$ is étale locally equivalent to $\Perf(X)$, by \cite[Proposition 1.14]{Toen}. 
\end{proof}
In particular, $\Pic_A := \pi^\et_0(\PPic_A)$ is étale-locally $\pi^\et_0(\PPic)$ which is the constant sheaf $\udl{\mathbb Z}$ by \Cref{prop:Pic=K}. This sheaf is the étale-sheafified analogue of $\mathcal K_0^A$ from \cite{kahnlevine}.

\begin{cons}\label{cons:e}
    For $U$ a scheme, the full subgroupoid of $\Catperf_U$ spanned by $\Perf(U)$ is equivalent to $B\PPic(U)$.The restriction of the $K$-theory functor $\Catperf_U\to \Mod_{K(U)}$ to this full subgroupoid is strong symmetric monoidal and natural in $U$. Thus we get a natural map of spectra $$\PPic(U)\to K(U)^\times$$ and so, by truncating and sheafifying on the étale site $X_\et$, a map of sheaves of connective spectra $$e:\PPic\to \PPic^\times$$ 
\end{cons}
The relevance of the map $e$ is as follows: it induces by delooping a map $$B_\et\PPic\to B_\et\PPic^\times=B_\et\Aut_{\PPic}(\PPic)$$ and hence, by local triviality of the Brauer group, a map from $\BBr$ to the space of twisted forms of $\PPic$ (i.e. $\PPic$-module sheaves locally equivalent to $\PPic$). On global sections, this map is exactly the map that sends an Azumaya algebra $A$ to $\PPic_A$. 

\begin{lm}\label{lm:effectofe}
    On $\pi_0$, the map $e$ fits into the following commutative diagram, with the reduction mod $2$ map $\udl\Z\to\udl{\Z/2}$, \[\begin{tikzcd}
	{\udl{\Z}} & {\udl{\Z/2}\cong\Z^\times} \\
	\Pic & {\Pic^\times}
	\arrow[from=1-1, to=1-2]
	\arrow[from=1-1, to=2-1]
	\arrow[from=1-2, to=2-2]
	\arrow["e"', from=2-1, to=2-2]
\end{tikzcd}\]
where the vertical maps are isomorphisms upon sheafification.

Taking the fiber over $0\in\mathbb Z/2$ in the target and $1\in\mathbb Z$ in the source of $e$,  the induced map $B\Gm\to B\Gm$ is the identity.     
\end{lm}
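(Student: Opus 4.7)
The plan is to handle the two assertions separately, both by direct unwinding of \Cref{cons:e}. For the commutative diagram on $\pi_0$, I would compute $\pi_0 e$ on explicit elements: before sheafification, $\pi_0 e$ sends an invertible $L \in \pi_0 \PPic(U)$ to its class $[L] \in K_0(U)^\times$, and for $L = \O_U[n]$ this equals $(-1)^n$. Under the standard identification $\{\pm 1\} \cong \Z/2$ this is $n \bmod 2$. The vertical maps become isomorphisms after étale sheafification by \Cref{prop:Pic=K} on the left, and on the right because $\pi_0 GL_1(R) = (\pi_0 R)^\times$, giving $\pi_0^\et \PPic^\times = \udl{\Z}^\times = \udl{\Z/2}$.

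For the second assertion, my plan is to reduce to computing $\pi_1 e$. After sheafification both $\PPic$ and $\PPic^\times$ are $1$-truncated connective spectra with $\pi_1 = \Gm$ (for $\PPic^\times$, using $\pi_i GL_1 = \pi_i$ for $i \geq 1$). Hence every connected component of either sheaf is canonically $B\Gm$ via its preferred basepoint, and any component-to-component restriction of $e$ yields a map $B\Gm \to B\Gm$ which, as a map of $1$-truncated connected spaces, is determined up to homotopy by its effect on $\pi_1$. Taking the rank-$1$ component of the source and the identity component of the target (the latter differs from the actual $e$-image of the rank-$1$ component by a harmless translation, since all components of $\PPic^\times$ share the same $\pi_1$) then produces precisely the $B\Gm \to B\Gm$ in the statement.

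It remains to check $\pi_1 e = \id_{\Gm}$, which I would verify on stalks. For a local commutative ring $R$, an element $u \in R^\times = \Aut_{\Perf(R)}(R) = \pi_1 \PPic(R)$ is the self-equivalence ``multiplication by $u$'', and its image under $e$ in $\pi_1(K(R)^\times) = K_1(R)$ is the classical class $[u]$. Under the splitting $K_1(R) \to R^\times$ given by the determinant (Weibel's Lemma III.1.4, as in the proof of \Cref{prop:Pic=K}) this is sent back to $u$. Hence $\pi_1 e = \id_{\Gm}$ and the induced $B\Gm \to B\Gm$ is the identity. The most delicate bookkeeping will be the basepoint translation in the preceding paragraph; the actual calculation on $\pi_1$ is entirely classical.
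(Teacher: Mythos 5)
Your proposal is correct and follows essentially the same route as the paper's proof: unwind \Cref{cons:e} to see that $e$ sends $\mathcal L$ to $[\mathcal L]$ (and an automorphism of $\mathcal L$ to the corresponding $K_1$-class), compute $(-1)^n$ on $\Sigma^n\O$ for the $\pi_0$ statement, and use that $\det\colon K_1\to\Gm$ is a left inverse of the canonical map $\Gm\to K_1$ (already invoked in \Cref{prop:Pic=K}) to get $\pi_1 e=\id_{\Gm}$. The only presentational difference is that you reduce to stalks over local rings and spell out the basepoint translation, whereas the paper argues at the sheaf level via the natural map $C^\simeq\to K(C)$ and leaves the component bookkeeping implicit; both come down to the same elementary computation.
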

\begin{proof}
For the first part, note that $\pi_0 K$ is locally generated by $\O$, whose determinant is by design $\Sigma \O$, whose $K$-theory class is $-1 \in \Z^\times \in K_0^\times$. 

For the second part, note that the map $\PPic\to \PPic^\times$ sends an invertible object $\mathcal L$ to its $K$-theory class $[\mathcal L]$, and an automorphism of $\mathcal L$ to the induced loop in $\Omega(K(U),[\mathcal L])$. This can for example be checked by noting that there is a map $C^\simeq\to K(C)$ natural in the stable category $C$. 

Thus, at the object $\O\in \PPic$ it induces on $\pi_1$ the canonical map $\Gm\to K_1$ which we mentioned in \Cref{prop:Pic=K} is an isomorphism upon sheafification (it's a left inverse of \emph{the} isomorphism by which we identify $L_\et K_1$ with $\Gm$). 
\end{proof}

We can now identify more precisely $\PPic_A$ as a twisted form of $\PPic$. For this, we first need to understand its $\pi_0^\et$. 
\begin{lm}
\'Etale sheaves of abelian groups that are locally isomorphic to $\udl{\mathbb Z}$ are classified by $H^1_\et(X;\Z^\times)=H^1_\et(X;\Z/2)$. 

Furthermore, under this identification for an Azumaya algebra $A$, the twist of $\udl{\Z}$ given by $\Pic_A$ is the image in $H^1_\et(X;\Z/2)$ of the class of $A$ under the composition $$\Br(X)\xrightarrow{r}H^1_\et(X;\Z)\to H^1_\et(X;\Z/2)$$. 
\end{lm}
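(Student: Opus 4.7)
The plan for the first assertion is to invoke the standard descent classification of twisted forms. A sheaf of abelian groups étale-locally isomorphic to $\underline{\mathbb Z}$ is precisely a torsor under the sheaf $\underline{\Aut_{\Ab}(\Z)}$, which classically equals $\underline{\Z^\times} = \underline{\Z/2}$, and such torsors are classified by $H^1_\et(X;\Z/2)$.

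For the second assertion, the plan is to unpack $\PPic_A$ as a twist and then trace through $\pi_0^\et$. By construction, $\PPic_A$ is the $\PPic$-module classified by the image of $[A]\in \Br(X) = \pi_0 \Gamma \BBr = \pi_0 \Gamma B_\et \PPic$ under the delooped map $B_\et e : B_\et\PPic \to B_\et\PPic^\times$ from \Cref{cons:e}. Consequently, the twist of $\Pic_A$ as a sheaf of abelian groups is obtained from $[A]$ by applying $\pi_0^\et$ and then taking $\pi_0$ of global sections, i.e.\ it is the image of $[A]$ under the map
$$\Br(X)\;\longrightarrow\; H^1_\et\!\left(X;\pi_0^\et\PPic^\times\right) \;=\; H^1_\et(X;\Z/2),$$
obtained by taking $\pi_0$ of global sections of the composite
$$B_\et \PPic \xrightarrow{\,B_\et e\,} B_\et \PPic^\times \longrightarrow B_\et \pi_0^\et \PPic^\times.$$
(Here I use that $\PPic^\times$ is connective, so that $\pi_0^\et \PPic^\times = (\pi_0^\et\PPic)^\times = \underline{\Z^\times} = \underline{\Z/2}$.)

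The final step is to identify this composite with $r$ followed by reduction mod $2$. The composite of sheaves of connective spectra $\PPic \xrightarrow{e} \PPic^\times \to \pi_0^\et \PPic^\times$ has $0$-truncated target, so it factors uniquely through $\PPic \to \pi_0^\et \PPic = \underline{\Z}$; by \Cref{lm:effectofe}, the induced map $\underline{\Z} \to \underline{\Z/2}$ is reduction mod $2$. Delooping and taking $\pi_0$ of global sections thus factors the displayed map as
$$\Br(X) \xrightarrow{\,r\,} H^1_\et(X;\Z) \longrightarrow H^1_\et(X;\Z/2),$$
which is the claim.

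The principal difficulty in this argument is purely bookkeeping: one must carefully match the two deloopings (the one from $\PPic$ to $B_\et\PPic = \BBr$, and the one from $\PPic^\times$ to $B_\et\PPic^\times$, the classifying space of $\PPic$-module twists) and make sure that the identification of $\pi_0^\et$ of $e$ with reduction mod $2$ propagates through both. This identification is precisely the content of \Cref{lm:effectofe}, so once the bookkeeping is in place the proof is essentially formal.
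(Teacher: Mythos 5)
Your proposal is correct and follows essentially the same route as the paper's proof: both parts are handled identically, and the crux in both is applying \Cref{lm:effectofe} to identify the sheafified $\pi_0$ of the composite $\PPic \to \PPic^\times \to \pi_0^\et\PPic^\times$ with reduction mod $2$, so that delooping and taking $\pi_0$ of global sections yields $r$ followed by reduction mod $2$. Your write-up is somewhat more explicit about the bookkeeping (factoring through $\pi_0^\et\PPic$, matching the two deloopings), but the mathematical content is the same.
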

\begin{proof}
The first part is classical and follows from the fact that the groupoid of abelian groups abstractly isomorphic to $\Z$ forms a $B(\Z^\times)$. 

For the second part, we note that the map $r:\Br(X)\to H^1_\et(X;\Z)$ is given by taking global sections on the map $B_\et\PPic\to B_\et \Pic$. Since we know by \Cref{lm:effectofe} that $B\Pic\to B\Pic^\times$ is identified with reduction mod $2$, $\udl{\Z}\to\udl{\Z/2}$ upon sheafification, the claim follows. 
\end{proof}

We already obtain: 
\begin{cor}
Let $A$ be an Azumaya algebra. There exists an isomorphism $$\Pic_A\cong L_\et \Pic (\cong \udl\Z)$$ of sheaves of abelian groups if and only the image of $[A]$ under $$\Br(X)\xrightarrow{r} H^1_\et(X;\Z)\to H^1_\et(X;\Z/2)$$ is zero. 

In particular, if $\U^X(A)\simeq \U^X(\O_X)$, then $r([A]) $ is divisible by $2$. 
\end{cor}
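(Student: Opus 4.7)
The plan is to observe that both parts of the corollary follow almost immediately from the preceding lemma together with the functoriality of $\PPic_{(-)}$ in motives.

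For the first (iff) statement, I would simply unpack the classification just established. The preceding lemma identifies sheaves locally isomorphic to $\udl{\mathbb Z}$ with $H^1_\et(X;\mathbb Z/2)$-torsors, and asserts that the class of $\Pic_A$ in this $H^1$ is the image of $[A]$ under the composition $\Br(X)\xrightarrow{r}H^1_\et(X;\mathbb Z)\to H^1_\et(X;\mathbb Z/2)$. Since the \emph{trivial} twist (namely $\udl{\mathbb Z}$ itself) corresponds to the zero class of the torsor, $\Pic_A \cong \udl{\mathbb Z}$ if and only if this image vanishes.

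For the ``in particular'' statement, the key input is that $\PPic_C$ was constructed as a functor $\Mot_X \to \Mod_{\PPic}(\Sh_\et(X;\Sp_{\geq 0}))$ of the motive $C$, not of a particular presentation. Thus an equivalence $\U^X(A)\simeq \U^X(\O_X)$ yields an equivalence $\PPic_A \simeq \PPic_{\O_X} = \PPic$ of sheaves of connective spectra, which on $\pi_0^\et$ gives $\Pic_A \cong L_\et \Pic \cong \udl{\mathbb Z}$. Applying the first part, the image of $[A]$ in $H^1_\et(X;\mathbb Z/2)$ is zero. To conclude that $r([A])$ is divisible by $2$, I would invoke the long exact sequence associated to the short exact sequence $0\to \mathbb Z \xrightarrow{2}\mathbb Z\to \mathbb Z/2\to 0$, which identifies the kernel of $H^1_\et(X;\mathbb Z)\to H^1_\et(X;\mathbb Z/2)$ with the image of multiplication by $2$, i.e.\ with the subgroup of elements divisible by $2$.

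There is really no substantial obstacle here: the whole content of the corollary has been loaded into the preceding lemma (the identification of the twist class of $\Pic_A$) and into the basic functoriality of the construction $C\mapsto \PPic_C$. The only tiny point of care is remembering that ``divisible by $2$ in $H^1_\et(X;\mathbb Z)$'' is precisely the kernel of reduction mod $2$, which is a direct consequence of the Bockstein sequence.
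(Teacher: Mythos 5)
Your proof is correct and matches what the paper intends: the paper gives no explicit argument (introducing the corollary with ``We already obtain''), since both parts follow directly from the preceding lemma, the functoriality of $C\mapsto\PPic_C$ on $\Mot_X$, and the Bockstein identification of $\ker(H^1_\et(X;\Z)\to H^1_\et(X;\Z/2))$ with $2H^1_\et(X;\Z)$, exactly as you spell out.
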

Now we deal with the more interesting part, namely $\pi_1$. 
\begin{defn}
    Let $2\PPic$ denote the pullback $\PPic\times_\Z 2\Z$. 
\end{defn}
As explained after \Cref{cons:e}, the assignment $A\mapsto \PPic_A$ corresponds to the map $Be: B_\et\PPic\to B_\et\PPic^\times$. Taking fibers over the map $B_\et\PPic^\times\to B_\et\udl{\Z/2}$ we get a map $B_\et2\PPic\to B^2_\et\Gm$, as well as a (definitional) map $B_\et 2\PPic\to B_\et 2\Z$. 
\begin{lm}
    The above maps induce an equivalence $B_\et 2\PPic\simeq B_\et 2\Z\times B^2_\et\Gm$. 
\end{lm}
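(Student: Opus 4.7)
The plan is to produce a splitting of the fiber sequence of sheaves of connective spectra
$$B^2_\et \Gm \longrightarrow B_\et 2\PPic \longrightarrow B_\et 2\udl\Z,$$
which itself arises by applying the exact functor $B_\et$ to the Postnikov-style fiber sequence $\Sigma\Gm \to 2\PPic \to 2\udl\Z$ (obtained by pulling back $\Sigma\Gm \to \PPic \to \udl\Z$ along $2\udl\Z \hookrightarrow \udl\Z$; for the latter, $\pi_0^\et \PPic \cong \udl\Z$ by \Cref{prop:Pic=K} and $\pi_1\PPic \cong \Gm$ by definition). The right-hand map of the statement is the second map in this sequence, and the left-hand map to $B^2_\et \Gm$ is the one built in the paragraph just before the statement.

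To split the sequence it then suffices to check that the left-hand map of the statement, viewed as $B_\et 2\PPic \to B^2_\et \Gm$, retracts the fiber inclusion $B^2_\et \Gm \hookrightarrow B_\et 2\PPic$. For this I would set up the commuting diagram of fiber sequences
$$\begin{tikzcd}
B^2_\et \Gm \arrow[r] \arrow[d, "\mathrm{id}"'] & B_\et \PPic \arrow[r] \arrow[d, "Be"] & B_\et \udl\Z \arrow[d] \\
B^2_\et \Gm \arrow[r] & B_\et \PPic^\times \arrow[r] & B_\et \udl{\Z/2}
\end{tikzcd}$$
where the left vertical is the identity by the second part of \Cref{lm:effectofe} and the right vertical is reduction mod $2$ by the first part. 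Pulling back the top row along $B_\et 2\udl\Z \to B_\et \udl\Z$ and noting that $B_\et 2\PPic$ is equivalently the fiber of the composite $B_\et \PPic \to B_\et \udl\Z \to B_\et \udl{\Z/2}$, the desired composition $B^2_\et \Gm \to B_\et 2\PPic \to B^2_\et \Gm$ is forced to coincide with the identity on the fiber of $Be$, namely $\mathrm{id}_{B^2_\et \Gm}$.

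I do not anticipate a real obstacle: everything reduces to bookkeeping with fiber sequences in sheaves of connective spectra combined with the already-established \Cref{lm:effectofe}. The one place where one has to be slightly careful is the identification of $B_\et(\PPic \times_{\udl\Z} 2\udl\Z)$ with the fiber of $B_\et \PPic \to B_\et \udl{\Z/2}$ (which is what licenses the construction of the map to $B^2_\et \Gm$), but this is immediate from comparing the two fiber sequences, since both sit as extensions of $B_\et 2\udl\Z$ by $B^2_\et \Gm$ coming naturally from the Postnikov structure of $\PPic$.
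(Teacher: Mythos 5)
Your argument is correct and rests on exactly the same key input as the paper's, namely that the first part of \Cref{lm:effectofe} identifies $\pi_0(e)$ with reduction mod $2$ and the second part gives the isomorphism on the $\Gm$-level; the paper packages this as directly checking that the map $B_\et 2\PPic\to B_\et 2\Z\times B^2_\et\Gm$ is an isomorphism on $\pi_1^\et$ (by design) and $\pi_2^\et$ (by \Cref{lm:effectofe}), whereas you package it as producing a retraction of the fiber inclusion and splitting the fiber sequence. These are the same proof in substance.
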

\begin{proof}
On $\pi_1$ this is clear by design of $B2\PPic$ and the map $B2\Z$. So we are left with checking that $B\PPic\to B\PPic^\times$ induces an isomorphism on $\pi_2$, but this is simply the second part of \Cref{lm:effectofe}. 
\end{proof}
\begin{rmk}
    As spaces, it is already true that $B_\et \PPic\simeq B_\et\Z\times B^2_\et\Gm$, but this equivalence does not hold as groups. On the other hand, the above equivalence is an equivalence of $\mathbb E_\infty$-groups. 
\end{rmk}
\begin{rmk}
    Note that unlike the equivalence $B_\et\PPic\simeq B_\et\Z\times B^2_\et\Gm$ from \cite{Toen}, in the above lemma the map $B_\et2\PPic\to B_\et^2\Gm$ essentially \emph{a priori} classifies (a variant of) the construction $A\mapsto \PPic_A$. So this lemma is in some sense the key connection between the description of $\Br(X)$ in terms of $H^2_\et(X,\Gm)$ and the construction $A\mapsto \PPic_A$. 

    The proof of \Cref{lm:effectofe} shows that, in fact, the identification of $\pi_2$ with $\Gm$ is the same for both equivalences. 
\end{rmk}
We are now ready to prove a weaker version of our main theorem - we state more than what we need later, to indicate exactly how far this method can be pushed: 
\begin{thm}
    Let $X$ be a qcqs scheme and let $A$ be an Azumaya algebra over $X$. Suppose $\U^X(A)$ is equivalent the unit of $\Mot_X$.

    Then the image of $A$ under $\Br(X)\to H^1_\et(X;\Z/2)$ is $0$. This guarantees that $A$ admits a lift along $$\pi_0\Gamma_\et(X,B2\PPic)\to\pi_0\Gamma_\et(X,B\PPic)\cong \Br^\der(X) $$

    For any such lift $\tilde A$, its image in $H^2_\et(X,\Gm)$ under the canonical isomorphism $$\pi_0\Gamma_\et(X,B2\PPic)\cong H^1_\et(X;2\Z)\times H^2_\et(X;\Gm)$$ is $0$. 
\end{thm}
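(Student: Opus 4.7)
My plan is to extract everything from a single input: since $\PPic_{(-)}$ factors through $\Mot_X$, the hypothesis $\U^X(A) \simeq \U^X(\O_X)$ yields a trivialization $\PPic_A \simeq \PPic$ of $\PPic$-module sheaves, which I will trace through the commutative square of fiber sequences
\[\begin{tikzcd}
B_\et 2\PPic \ar[r, "\pi"] \ar[d, "\iota"] & B_\et^2\Gm \ar[d, "\iota'"] \\
B_\et\PPic \ar[r, "Be"] & B_\et\PPic^\times
\end{tikzcd}\]
provided by \Cref{cons:e} and the preceding splitting lemma, in which $\pi$ is the projection onto the $B_\et^2\Gm$-factor.

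The first claim is immediate: the equivalence $\PPic_A \simeq \PPic$ induces $\Pic_A \cong \udl{\Z}$ on $\pi_0^\et$, so by the preceding corollary the image of $[A]$ in $H^1_\et(X;\Z/2)$ vanishes. Existence of a lift $\tilde A \in \pi_0\Gamma_\et(X, B_\et 2\PPic)$ then follows from the long exact sequence attached to $B_\et 2\PPic \to B_\et\PPic \to B_\et\udl{\Z/2}$.

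For the $H^2_\et(X;\Gm)$-vanishing, the trivialization reads $Be_*([A]) = 0$, so by commutativity of the square $\iota'_*(\pi_*(\tilde A)) = 0$ for every lift $\tilde A$. To conclude $\pi_*(\tilde A) = 0$ I will exploit that $\PPic_A \simeq \PPic$ provides not merely this vanishing but a preferred null-homotopy of $Be \circ A$, thereby factoring the map $\pt \to B_\et \PPic$ classifying $[A]$ through the fiber $F := \fib(Be)$. A long-exact-sequence calculation, using that $Be$ is an isomorphism on $\pi_2$ by \Cref{lm:effectofe}, identifies $F \simeq B_\et 2\udl\Z$; since $\pi_2 F = 0$, the composite $F \to B_\et 2\PPic \xrightarrow{\pi} B_\et^2\Gm$ is null, so the canonical lift arising from the trivialization has vanishing $H^2_\et$-component.

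The main obstacle is upgrading ``the canonical lift has vanishing $H^2_\et$-component'' to the uniform statement ``any lift does''. By naturality of connecting maps, the $H^2_\et(X;\Gm)$-components of two lifts of $[A]$ differ by an element in the image of the connecting map $\partial\colon H^0_\et(X;\Z/2) \to H^2_\et(X;\Gm)$ of the bottom fiber sequence, so what remains is to show $\partial = 0$ — equivalently, that the $k$-invariant of the connective spectrum $\PPic^\times$ (with $\pi_0 = \udl{\Z/2}$, $\pi_1 = \Gm$) vanishes globally. I expect this to follow either from a direct Postnikov analysis or, more conceptually, by constructing an explicit section of $B_\et\PPic^\times \to B_\et\udl{\Z/2}$ from the shift autoequivalence $\O \mapsto \O[1]$, which realizes the nontrivial class of $\pi_1 B_\et\PPic^\times = \udl{\Z/2}$ lifted all the way along the unit of $\PPic$.
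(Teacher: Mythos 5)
Your proof follows the same overall skeleton as the paper's: establish the vanishing in $H^1_\et(X;\Z/2)$, exhibit one lift whose $H^2_\et(X;\Gm)$-component vanishes, observe that any two lifts differ by a class in the image of the connecting map $\partial\colon H^0_\et(X;\Z/2)\to H^2_\et(X;\Gm)$ of the fiber sequence $B^2\Gm\to B\PPic^\times\to B\udl{\Z/2}$, and reduce to showing $\partial=0$. Your derivation of the canonical lift is slightly more elaborate than the paper's (the paper just observes that triviality of $Be_*[A]$ means the zero path to $B^2\Gm$ is available, whereas you route through $F=\fib(Be)\simeq B_\et 2\udl\Z$ and note that the composite $F\to B_\et 2\PPic\xrightarrow{\pi}B^2_\et\Gm$ is null), but this is correct and buys nothing extra.

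The genuine gap is the final step, $\partial=0$, which you do not actually prove. You write that you ``expect'' it to follow from a Postnikov analysis or from an explicit splitting of $B_\et\PPic^\times\to B_\et\udl{\Z/2}$ built out of $\O\mapsto\O[1]$. Neither is carried out, and the proposed section is not clearly available: to produce a map of sheaves of connective spectra $\udl{\Z/2}\to\PPic^\times$ lifting the identity on $\pi_0$, you would need the element $-1=[\O[1]]\in\pi_0\PPic^\times=\Z^\times$ to have \emph{spectral} order $2$, i.e.\ you would need the map $\Sph\to\PPic^\times$ classifying it to factor (coherently, $\mathbb E_\infty$-ly) through $\udl{\Z/2}$, and nothing in the paper's earlier lemmas gives that; the relation $(-1)^2=1$ in $\pi_0$ does not suffice. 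The paper avoids this entirely by a much more elementary observation: every class in $H^0_\et(X;\Z/2)$ is pulled back along $X\to\coprod_{i\in I}\Spec\Z$ for finite $I$, and since $H^2_\et(\Spec\Z;\Gm)=0$, naturality of $\partial$ forces it to vanish. This gives only the vanishing of $\partial$ on global sections for each qcqs $X$ --- a weaker statement than triviality of the $k$-invariant of the sheaf $\PPic^\times$ --- but that weaker statement is exactly what the theorem requires. You should substitute this pullback argument (or genuinely prove the splitting) to close the gap.
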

\begin{rmk}
  This method based on local analysis does not seem to be able to distinguish $2H^1_\et(X;\Z)$ from $0$ in general, essentially because $\Sigma^2$ acts trivially on $\U^X(X)$. As we explain below, we can distinguish it if we combine that method with the ones from \cite{TvdB2}.
\end{rmk}
\begin{proof}
    We have already seen the first part. For the second part, note that if $\U^X(A)\simeq \U^X(\O_X)$, then the image of $A$ under the map $B_\et\PPic\to B_\et \PPic^\times$ is trivial so we can choose the zero lift to $B^2\Gm$, which gives an example of \emph{a} lift for which the image is $0$. 

    Any other lift corresponds to a second nullhomotopy of the corresponding point in $B\Z/2$, i.e. to a point in $H^0_\et(X,\Z/2)$. Thus given any lift, its image in $H^2_\et(X,\Gm)$ is in the image of the canonical map $H^0_\et(X,\Z/2)\to H^2_\et(X,\Gm)$ induced by the fiber sequence $\Z/2\to B^2\Gm\to B\PPic^\times$. Thus we have to prove that this map is $0$.

    But any class in $H^0_\et(X,\Z/2)$ is pulled back from a class in $H^0_\et(\coprod_i \Spec(\Z),\Z/2)$ for some finite (since $X$ is qc) indexing set $I$, and $H^2_\et(\coprod_i \Spec(\Z),\Gm)=0$, so this is clear.
\end{proof}
\begin{rmk}\label{rmk:otherapproach}
    Let us comment on a different approach to the same proof. For an Azumaya algebra $A$ over $X$, fixing an isomorphism (if it exists!) $\Pic_A\cong\udl\Z$, one can then consider the fiber over $1\in\udl\Z$ of $\PPic_A\to \Pic_A$. This is ``the $\Gm$-part'' of $\PPic_A$ and is where the interesting information lies: this is a twisted form of $B\Gm$. 

    One can in fact prove that $H_1$ of this twisted form is canonically $\Gm$, and hence this twisted form lives in the (cohomology of) fiber of the canonical map $B\Aut(B\Gm)\to B\Aut(\Gm)$, which is none other than $B^2\Gm$, and one can then verify that the corresponding cohomology class is \emph{also} the expected one in $H^2_\et(X,\Gm)\subset \Br(X)$. The details are more involved to verify, so we went for the above approach instead.
    
    We note, on the other hand, that when $A$ is classical, this fiber has a natural geometric interpretation which was pointed out to us by Ben Antieau. Indeed, in this case, $A$ has an associated Severi-Brauer variety $P\to X$ which is a twisted form of $\mathbb P^n_X$ for some $n$. The above fiber is then isomorphic to $(U\mapsto \PPic^{\heartsuit,1}(P_{U}))$, the sheaf that associates to an étale map $U\to X$ the groupoid of (classical) line bundles on $P\times_X U$ of degree $1$. This follows from considering the semi-orthogonal decomposition $\Perf(P)= \langle \Perf(X), \Perf(A),...,\Perf(A^{\otimes n})\rangle$ from \cite[§8, Section 4]{quillen} (see also \cite[Theorem 5.1]{bernardara2009semiorthogonal}) (indeed it follows from this that $\PPic(P_{-})\simeq \PPic_P \simeq \prod_{k=0}^n \PPic_{A^{\otimes k}}$ as sheaves on $X$).
\end{rmk}
As a sample corollary we have the following restricted injectivity announced in the introduction:
\begin{cor}\label{cor:restinj}
    Let $X$ be a qcqs scheme. The composite $$H^2_\et(X,\Gm)\to \Br^\der(X)\to \Pic(\Mot_X)$$ is injective. 
\end{cor}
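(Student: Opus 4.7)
The plan is to deduce this immediately from the previous theorem by exhibiting a canonical, geometrically meaningful lift of elements of $H^2_\et(X,\Gm)$ to $\pi_0\Gamma_\et(X, B_\et 2\PPic)$, and identifying this lift under the splitting $B_\et 2\PPic\simeq B_\et 2\udl\Z\times B^2_\et\Gm$.

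First I would construct a canonical factorization of the natural map $H^2_\et(X,\Gm)\to\Br^\der(X)$ through $\pi_0\Gamma_\et(X, B_\et 2\PPic)$. Recall that the map $B^2\Gm\to B_\et\PPic\simeq\BBr$ is obtained by delooping the inclusion $B\Gm\hookrightarrow\PPic$ of the unit connected component. Since $B^2\Gm$ is simply connected as an étale sheaf, the composite $B^2\Gm\to B_\et\PPic\to B_\et\udl\Z$ is nullhomotopic, with contractible space of nullhomotopies (the latter is $\Map(B^2\Gm,\udl\Z)$, which is contractible because $\udl\Z$ is discrete and $\pi_0^\et B^2\Gm = 0$). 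Hence $B^2\Gm\to B_\et\PPic$ admits a canonical lift to $B^2\Gm\to B_\et 2\PPic$.

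Next I would identify this lift under the splitting $B_\et 2\PPic\simeq B_\et 2\udl\Z\times B^2_\et\Gm$ from the second lemma of \S 2. The projection to $B_\et 2\udl\Z$ is null, since $B_\et 2\udl\Z$ is $1$-truncated and $B^2\Gm$ is $2$-connective. On the other hand, the projection to $B^2_\et\Gm$ comes from the fiber of $B_\et\PPic^\times\to B_\et\udl{\Z/2}$; the composite $B^2\Gm\to B_\et\PPic\to B_\et\PPic^\times$ is an equivalence on $\pi_2$ by the second part of \Cref{lm:effectofe}, so $B^2\Gm\to B^2_\et\Gm$ is an equivalence. In other words, on global sections, the lift is identified with the inclusion of $H^2_\et(X,\Gm)$ as the second factor of
\[
\pi_0\Gamma_\et(X, B_\et 2\PPic)\cong H^1_\et(X,2\udl\Z)\times H^2_\et(X,\Gm).
\]

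Finally the corollary follows at once. Given $\alpha\in H^2_\et(X,\Gm)$ with trivial image in $\Pic(\Mot_X)$, any Azumaya algebra $A$ representing its image in $\Br^\der(X)$ satisfies $\U^X(A)\simeq\U^X(\O_X)$. By the construction above, $\alpha$ itself produces a lift $\tilde A\in \pi_0\Gamma_\et(X, B_\et 2\PPic)$ of $A$ whose $H^2_\et(X,\Gm)$-component is exactly $\alpha$. By the preceding theorem, the $H^2_\et(X,\Gm)$-component of any such lift vanishes, so $\alpha=0$. The only nontrivial step is the identification of the canonical lift under the splitting of $B_\et 2\PPic$; the rest is formal bookkeeping given the preceding lemmas.
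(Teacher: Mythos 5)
Your argument is correct, and it is precisely the argument the paper leaves implicit: the corollary is stated without a proof environment, as an immediate consequence of the preceding theorem, and the one genuinely non-formal point you isolate — that the canonical lift of $B^2\Gm\to B_\et\PPic$ through $B_\et 2\PPic$ has trivial component along $B_\et 2\udl{\Z}$ (connectivity vs.\ truncation) and equivalence component along $B^2_\et\Gm$ (an iso on $\pi_2$ between $2$-connective $2$-truncated sheaves, via the second part of \Cref{lm:effectofe}) — is exactly what needs checking, and you check it soundly. One cosmetic remark: a lift through $B_\et 2\PPic$ requires a nullhomotopy of $B^2\Gm\to B_\et\udl{\Z/2}$ rather than $B_\et\udl\Z$; your nullhomotopy of the latter of course induces one of the former (and yields the cleaner description of the lift as the fiber inclusion over $0$ of the definitional projection $B_\et 2\PPic\to B_\et 2\udl\Z$), and the same connectivity argument shows the space of such nullhomotopies is contractible, so nothing is affected.
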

Let us now prove full injectivity, which is our main theorem. As a warm-up, we begin with the affine case: 
\begin{cor}\label{cor:injaff}
    Let $R$ be a commutative ring. The map $\Br^\der(R)\to \Pic(\Mot_R)$ is injective. 
\end{cor}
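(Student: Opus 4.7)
The plan is to combine the restricted injectivity of \Cref{cor:restinj} with a reduction from the derived to the classical setting via a variant of Tabuada-Van den Bergh.

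First, suppose $A$ is a derived Azumaya algebra over $R$ with $\U^R(A)\simeq \U^R(R)$; the goal is to show $[A]=0$ in $\Br^\der(R)$. Passing to rational motives yields $\U^R(A)_{\mathbb Q}\simeq \U^R(R)_{\mathbb Q}$, and $R$ itself represents a (trivial) classical Azumaya algebra. A variant of \cite[Theorem B.15]{TvdB2}---with the noetherian hypothesis removed and extended to schemes with an ample line bundle via de Jong-Gabber, a condition trivially met by the affine scheme $\Spec R$---then implies that $A$ is Morita equivalent to a classical Azumaya algebra $A'$.

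Next, since $A'$ is classical, its Brauer class lies in the kernel of $r: \Br^\der(R)\to H^1_\et(\Spec R,\Z)$: the local rank of a classical Azumaya algebra is concentrated in degree $0$ and so contributes nothing to the ``shift'' measured by $r$. The long exact sequence attached to the fiber sequence $B^2_\et\Gm\to B_\et\PPic\to B_\et\udl{\Z}$ then lifts the common class $[A]=[A']$ to some $c\in H^2_\et(\Spec R,\Gm)$.

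Finally, \Cref{cor:restinj} asserts the injectivity of the composite $H^2_\et(\Spec R,\Gm)\to \Br^\der(R)\to\Pic(\Mot_R)$. The hypothesis on $A$ forces the image of $[A]$ in $\Pic(\Mot_R)$ to be trivial, hence $c=0$, and so $[A]=[A']=0$ as claimed. I expect the main obstacle to lie in justifying the non-noetherian version of \cite[Theorem B.15]{TvdB2}; the excerpt suggests this should follow from the original argument by substituting Gabber's theorem with its de Jong-Gabber refinement, but the verification is not entirely automatic and should be spelled out carefully (either in this paper or by reference).
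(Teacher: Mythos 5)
Your proof follows essentially the same route as the paper: reduce to the classical case via (a variant of) Tabuada--Van den Bergh's Theorem B.15, observe that a classical Azumaya algebra has trivial image under $r:\Br^\der(R)\to H^1_\et(\Spec R,\Z)$, and then apply \Cref{cor:restinj}. That part is correct and is exactly what the paper does.

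The one genuine issue is your proposed mechanism for removing the noetherian hypothesis. You suggest this should follow by ``substituting Gabber's theorem with its de Jong--Gabber refinement,'' but this is a misreading of the footnote in the introduction: the de Jong--Gabber result is about replacing the \emph{affineness} hypothesis in Gabber's theorem (torsion classes in $H^2_\et(X,\Gm)$ being represented by classical Azumaya algebras) with an ample-line-bundle hypothesis, and is orthogonal to the noetherian issue. The paper instead drops noetherianness by a different observation (its \Cref{lm:Tvdbnonoeth}): the only place where TvdB2 use noetherianness is in \cite[Lemma B.16]{TvdB2}, asserting that a perfect $R$-module $P$ with $\mathrm{rank}(P)\neq 0$ generates $D(R)$; this reduces to the noetherian case because such a $P$ descends to a finite-type subring $R'\subset R$, and generators base-change to generators. (The paper also records an alternative via Thomason's theorem in \Cref{lm:thomason}.) So you correctly flagged that the non-noetherian version needs a justification, but your proposed route to fill it would not succeed.
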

Before giving a proof, we point out the following, which will allow us to use \cite[Theorem B.15]{TvdB2} without restricting to noetherian rings\footnote{In fact, from the noetherian case there are many different ways to deduce the genral case of the above corollary.}: 
\begin{lm}\label{lm:Tvdbnonoeth}
    The conclusion of \cite[Theorem B.5]{TvdB2} holds without the noetherian assumption, that is, let $R$ be a commutative ring and $A$ a derived Azumaya algebra over $R$. If $U_R(A)_\mathbb Q\simeq U_R(R)_\mathbb Q$ then $A$ is Morita equivalent to a classical Azumaya algebra. 
\end{lm}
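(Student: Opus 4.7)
The plan is a straightforward noetherian approximation argument, reducing to the noetherian case already handled by Tabuada--Van den Bergh. Write $R \simeq \colim_\alpha R_\alpha$ as the filtered colimit of its finitely generated $\mathbb Z$-subalgebras, each of which is automatically noetherian. The three things to check are: (i) that the derived Azumaya algebra $A$ descends to some $R_\alpha$; (ii) that the hypothesis $\U^R(A)_\mathbb Q \simeq \U^R(R)_\mathbb Q$ also descends (possibly after enlarging $\alpha$); and (iii) that Morita equivalence with a classical Azumaya algebra ascends back along the base-change to $R$. With these in hand, the lemma follows by applying \cite[Theorem B.15]{TvdB2} at a noetherian level.

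For (i), derived Azumaya algebras are finitely presented in a strong sense: by Toën's characterization (\cite{Toen}), $\Br^\der$ is locally of finite presentation as a functor on commutative rings, so any $A/R$ is étale-locally pulled back from a finitely presented base, and by standard descent one can arrange $A \simeq A_\alpha \otimes_{R_\alpha}R$ for some derived Azumaya algebra $A_\alpha$ over a sufficiently large $R_\alpha$. For (iii), if $A_\alpha$ is Morita equivalent to a classical Azumaya algebra $B_\alpha$ over $R_\alpha$, then $A \simeq A_\alpha\otimes_{R_\alpha}R$ is Morita equivalent to $B_\alpha\otimes_{R_\alpha} R$, and the classical base-change of a classical Azumaya algebra is classical.

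The main obstacle is (ii): we need to descend the motivic equivalence. For this, we use that $\Catperf_{(-)}$ is a finitary functor valued in presentable $\infty$-categories (the assignment $R\mapsto \Mod_{\Perf(R)}(\Catperf)$ preserves filtered colimits of rings), and that the construction of $\Mot$ from $\Catperf$ recalled in the preliminaries (as a Bousfield localization of a sheaf category, cf.\ \cite{HSS,RSW}) is compatible with these filtered colimits. Consequently, since $\U^{R_\alpha}(A_\alpha)$ and $\U^{R_\alpha}(R_\alpha)$ are compact objects of $\Mot_{R_\alpha}$, the mapping anima between them (and its rationalization) commute with the colimit along $\alpha$; concretely,
\[
\Map_{\Mot_R}\!\bigl(\U^R(A)_\mathbb Q,\U^R(R)_\mathbb Q\bigr) \;\simeq\; \colim_\alpha \Map_{\Mot_{R_\alpha}}\!\bigl(\U^{R_\alpha}(A_\alpha)_\mathbb Q,\U^{R_\alpha}(R_\alpha)_\mathbb Q\bigr).
\]
Any equivalence $\U^R(A)_\mathbb Q\simeq \U^R(R)_\mathbb Q$ is therefore represented by a morphism at some finite stage $\alpha$; since being an equivalence can be detected by a two-sided inverse, which is itself a finite amount of data, it can be realized at a further index $\alpha'$. (Alternatively, one may detect the equivalence through, e.g., connective $K$-theory together with enough natural localizing invariants, which are all finitary functors of the base.)

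Having arranged that $A_{\alpha'}$ is a derived Azumaya algebra over the noetherian ring $R_{\alpha'}$ with $\U^{R_{\alpha'}}(A_{\alpha'})_\mathbb Q \simeq \U^{R_{\alpha'}}(R_{\alpha'})_\mathbb Q$, the noetherian case \cite[Theorem B.15]{TvdB2} yields a classical Azumaya algebra $B_{\alpha'}$ over $R_{\alpha'}$ Morita equivalent to $A_{\alpha'}$, and step (iii) concludes the proof. As an alternative to noetherian approximation, one could instead inspect the proof of \cite[Theorem B.15]{TvdB2} and replace the appeal to Gabber's theorem by de Jong--Gabber, which applies directly to affine schemes without a noetherian hypothesis; this is essentially the route alluded to in the introductory footnote, and gives the stated generalization immediately.
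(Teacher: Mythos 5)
Your proposal takes a genuinely different route from the paper, and while the overall strategy (noetherian approximation) is plausible, the central step (ii) contains a gap as written.

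The paper's own proof is much more surgical: it inspects the proof of \cite[Theorem B.15]{TvdB2} and observes that the noetherian hypothesis enters in exactly one place, namely \cite[Lemma B.16]{TvdB2}, which asserts that a perfect $R$-module $P$ of nowhere-zero rank generates $D(R)$. One then only needs to extend \emph{that} one lemma past the noetherian case, and this is a very mild noetherian approximation: $P$ descends to a perfect module $P'$ of nonzero rank over a finite-type $\Z$-subalgebra $R'\subset R$ (hence noetherian), $P'$ generates $D(R')$ by the noetherian case, and generators base-change to generators. The remainder of Tabuada--Van den Bergh's argument goes through verbatim for arbitrary commutative $R$. This sidesteps any need to approximate the Azumaya algebra itself, the motive category, or the motivic equivalence.

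Your plan instead tries to descend the \emph{entire} input — the derived Azumaya algebra and the rational motivic equivalence — to a noetherian stage. Steps (i) and (iii) are fine and standard. Step (ii), however, is justified by the claim that $R\mapsto\Catperf_R$ and then $R\mapsto\Mot_R$ are ``finitary'' in a way that makes $\Mot_R\simeq\colim_\alpha\Mot_{R_\alpha}$, and then appeals to compactness of $\U^{R_\alpha}(A_\alpha)$ \emph{inside each} $\Mot_{R_\alpha}$. The latter compactness is not the relevant notion for a colimit taken over $\alpha$, and the finitariness of the passage $R\mapsto\Mot_R$ (a large Bousfield localization) is not an established fact and should not be asserted casually. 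What would actually make step (ii) work is the identification of the specific mapping spectra $\Map_{\Mot_R}(\U^R(A),\U^R(R))$ and its reverse with (opposite-)$K$-theory spectra, which \emph{are} finitary in the base ring, and then a check that the inverse-and-homotopy data realizing an equivalence can be pushed to a finite stage. That argument can be made to work, but it needs to be spelled out along those lines rather than via a blanket finitariness claim for $\Mot$. Finally, your closing remark about replacing Gabber with de Jong--Gabber conflates two different hypotheses: as the paper explains, Gabber's theorem is where \emph{affineness} is used, whereas the noetherian assumption is used in Lemma B.16; swapping in de Jong--Gabber is what lets one go beyond affine schemes (to those with an ample line bundle), and does not by itself address the noetherian issue this lemma is about.
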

\begin{proof}
    The only step in the proof of \cite[Theorem B.15]{TvdB2} where the authors use that $R$ is noetherian is to prove \cite[Lemma B.16]{TvdB2}, which states that for a perfect $R$-module $P$, if $\mathrm{rank}(P)\neq 0$, then $P$ generates $D(R)$.  One can in fact \emph{deduce} the general case from the noetherian case here by observing that such a $P$ is basechanged from a finite type and hence noetherian commutative ring $R'$ with a map to $R$, where $P'$ must also have nontrivial rank, and therefore be a generator of $D(R')$. But now generators basechange to generators, so we are done. 
\end{proof}

\begin{proof}[Proof of \Cref{cor:injaff}]
Let $A$ be an Azumaya algebra over $R$. Suppose $\U^R(A)\simeq \U^R(R)$. Then, by \Cref{lm:Tvdbnonoeth} (i.e. the non-noetherian version of \cite[Theorem B.15]{TvdB2}), $A$ is equivalent to a \emph{classical} Azumaya algebra (this is where we use that $R$ is noetherian). Hence, its image in $H^1_\et(\Spec(R),\Z)$ is $0$, so that it belongs to the subgroup $H^2_\et(\Spec(R),\Gm)\subset \Br^\der(R)$ (in fact even the torsion subgroup thereof). 

Therefore we may conclude by the previous corollary. 
\end{proof}
In the proof of \Cref{cor:injaff}, affineness is used only very mildly: tracing through \cite[Appendix B]{TvdB2}, one finds that it is used in two places: firstly, to apply the result of Gabber that guarantees that torsion classes in $H^2_\et(\Spec(R),\Gm)$ are represented by classical Azumaya algebras, and second, to prove \cite[Lemma B.16]{TvdB2}. The former is known not to be true in full generality, but the authors prove in \cite[Theorem B.12]{TvdB2} that no matter what, if the rank of an Azumaya algebra $A$ is everywhere nontrivial, then this Azumaya algebra is \emph{torsion} in the Brauer group, with no affineness assumption (and this is where they \emph{then} use affineness to deduce classicality, but we will not actually need this). It turns out that being torsion is enough, essentially since $H^1_\et(X,\Z)$ is torsion-free. 

The latter can also be proved in general, by taking into account $\QCoh(X)$-linearity, and we do so below (so that our \Cref{lm:Tvdbnonoeth} gets a second proof): 
    \begin{lm}\label{lm:thomason}
        Let $X$ be a qcqs scheme and $P$ a perfect quasicoherent sheaf on $X$ with nowhere vanishing rank. Then the localizing ideal generated by $P$ in $\QCoh(X)$ is the whole of $\QCoh(X)$.  
    \end{lm}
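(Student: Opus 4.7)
The plan is to derive this directly from Thomason's classification of thick tensor-ideals in $\Perf(X)$, which holds for arbitrary qcqs schemes.

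First, I would translate the rank hypothesis into a support statement. Since the (Euler-characteristic) rank of $P$ is nowhere vanishing, at every $x \in X$ the complex $P \otimes_{\O_X} \kappa(x)$ has nonzero Euler characteristic, hence is nonzero. Therefore the support of $P$ in $|X|$ equals all of $|X|$.

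Second, I would apply Thomason's classification theorem: for a qcqs scheme $X$, the thick tensor-ideals of $\Perf(X)$ are in bijection with Thomason subsets of $|X|$ (unions of closed subsets with quasi-compact open complement), via the assignment $\mathcal I \mapsto \bigcup_{Q \in \mathcal I} \on{supp}(Q)$. Since $|X|$ is trivially a Thomason subset and equals the support of $P$, the thick tensor-ideal $\langle P \rangle_{\mathrm{thick}} \subseteq \Perf(X)$ generated by $P$ must be all of $\Perf(X)$; in particular it contains $\O_X$.

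Finally, the localizing ideal $\langle P \rangle_{\mathrm{loc}} \subseteq \QCoh(X)$ generated by $P$ contains $\langle P \rangle_{\mathrm{thick}}$: any localizing ideal is a stable subcategory closed under retracts (being cocomplete) and closed under tensoring with $\Perf(X) \subseteq \QCoh(X)$, so it automatically absorbs all operations used to form the thick tensor-ideal generated by $P$ inside $\Perf(X)$. Thus $\O_X \in \langle P \rangle_{\mathrm{loc}}$, and since $\O_X$ is the tensor unit this forces $\langle P \rangle_{\mathrm{loc}} = \QCoh(X)$.

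There is no substantive obstacle here; the content of the lemma is essentially packaged in Thomason's theorem. The main point worth emphasizing is that Thomason's original argument is already uniform in qcqs schemes, so this approach entirely sidesteps the noetherian reduction used in \Cref{lm:Tvdbnonoeth} and gives a direct alternative proof of it, as advertised in the paragraph preceding the lemma.
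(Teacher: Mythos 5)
Your proof is correct and follows essentially the same route as the paper: translate the nowhere-vanishing-rank hypothesis into the statement that $P$ has full support, invoke Thomason's classification of thick tensor-ideals in $\Perf(X)$ to conclude the thick $\otimes$-ideal generated by $P$ is all of $\Perf(X)$, and then observe the localizing ideal contains it. The only difference is cosmetic: you phrase the support condition via nonvanishing of $P \otimes_{\O_X} \kappa(x)$ (nonzero Euler characteristic), while the paper phrases it via nonvanishing of the stalk $P_x$; for perfect complexes these agree by Nakayama, so the two arguments are the same.
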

    \begin{proof}
      Since $P$ has nowhere vanishing rank, its \emph{support} (the subset of $|X|$ given by those $x$ such that $P_x\neq 0$) is the whole of $|X|$. It follows by Thomason's classification theorem \cite[Theorem 3.14]{thomason} that the thick $\otimes$-ideal generated by $P$ is $\Perf(X)$, from which the claim follows. 
    \end{proof}
We therefore obtain our main theorem as stated in the introduction:
\begin{thm}
    Let $X$ be a qcqs scheme. The map $\Br^\der(X)\to \Pic(\Mot_X)$ is injective.
\end{thm}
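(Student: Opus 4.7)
The plan is to extend the affine argument (\Cref{cor:injaff}) to an arbitrary qcqs scheme $X$, using the restricted injectivity \Cref{cor:restinj} to conclude. Given a derived Azumaya algebra $A$ over $X$ with $\U^X(A) \simeq \U^X(\O_X)$, I aim to show that $[A]$ lies in the subgroup $H^2_\et(X;\Gm) \subseteq \Br^\der(X)$, equivalently that $r([A]) = 0$ in $H^1_\et(X;\Z)$; once this is established, \Cref{cor:restinj} forces $[A] = 0$, which is the claim.

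The key step, taking the place of \Cref{lm:Tvdbnonoeth} in the affine case, is to show that $[A]$ is torsion in $\Br^\der(X)$. Granting torsion, $r([A]) \in H^1_\et(X;\Z)$ is torsion, and since $H^1_\et(X;\Z)$ is torsion-free (as remarked in the discussion following \Cref{cor:injaff}), $r([A]) = 0$, as desired. To establish torsion, I would invoke \cite[Theorem B.12]{TvdB2}: any Azumaya algebra whose rank is everywhere nonzero is torsion in the Brauer group. The rank hypothesis is ensured by the identification $\U^X(A) \simeq \U^X(\O_X)$, which matches the rank data of $A$ with that of the unit object (namely the constant function $1$, which is nowhere zero). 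This bypasses the use of Gabber's theorem, which gave classicality in the affine argument but is not available in qcqs generality.

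The main technical obstacle is extending \cite[Theorem B.12]{TvdB2} itself from the affine/noetherian to the qcqs setting. As pinpointed in the discussion after \Cref{cor:injaff}, the only essential use of the noetherian/affineness hypothesis in the proof given by Tabuada--Van den Bergh is via the generation statement \cite[Lemma B.16]{TvdB2}; substituting our \Cref{lm:thomason}, which uses Thomason's classification of thick tensor ideals to give the same conclusion for arbitrary qcqs schemes, should allow their proof of B.12 to go through in this generality. What remains is a careful reading of \cite[Appendix B]{TvdB2} to confirm that no additional noetherian hypothesis is needed beyond the generation statement now supplied by \Cref{lm:thomason}.
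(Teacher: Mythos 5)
Your overall strategy matches the paper's: show that $\U^X(A)\simeq\U^X(\O_X)$ forces $[A]$ to be torsion in $\Br^\der(X)$, use torsion-freeness of $H^1_\et(X;\Z)$ to kill $r([A])$, and close with \Cref{cor:restinj}. However, there is a genuine gap in how you establish the torsion statement, together with a misplacement of where the technical input \Cref{lm:thomason} enters.

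You claim the nowhere-vanishing rank hypothesis of \cite[Theorem B.12]{TvdB2} ``is ensured by the identification $\U^X(A)\simeq\U^X(\O_X)$, which matches the rank data of $A$ with that of the unit object.'' This is not correct: a motive equivalence does not pin down the rank of $A$ as a perfect complex. Rank is not Morita-invariant, so it cannot be read off from $\U^X(A)$; a derived Azumaya algebra Morita-equivalent to $\O_X$ can perfectly well have rank vanishing at some points (and even in the classical case the rank would be a square $n^2$, not $1$). The real content, borrowed from the proof of \cite[Theorem B.15]{TvdB2}, is that the motive (or $K$-theoretic) identification allows one to replace $A$ by a \emph{Morita-equivalent} Azumaya algebra $B$ whose rank is everywhere nonzero. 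It is precisely in that replacement step that \cite[Lemma B.16]{TvdB2} is invoked, and it is that lemma which \Cref{lm:thomason} generalizes from the affine noetherian setting to arbitrary qcqs schemes. By contrast, \cite[Theorem B.12]{TvdB2} itself (nowhere-vanishing rank implies torsion) already holds for qcqs schemes with no affineness or noetherian hypothesis, as the paper points out; it does not need the Thomason-based lemma. So your proposal assigns \Cref{lm:thomason} to the wrong step and, more seriously, treats as free the step it is actually responsible for. With that step filled in correctly, the rest of your argument goes through and coincides with the paper's.
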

\begin{proof}
    The proof was essentially explained before: let $A$ be an Azumaya algebra with $\U^X(A)\simeq \U^X(X)$. 

    The proof of \cite[Theorem B.15]{TvdB2} works with no modification until the point where one concludes that $A$ is Morita equivalent to $B$ where $B$ is an Azumaya algebra with nowhere vanishing rank thanks to the above \Cref{lm:thomason}. 

    Therefore, \cite[Theorem B.11]{TvdB2} applies and we find that $B$ (or equivalently $A$) is torsion in the Brauer group. Since $H^1_\et(X,\Z)$ is torsion-free, it follows that the image of $A$ therein is $0$, and so the Brauer class of $A$ lives in the subgroup $H^2_\et(X,\Gm)$. There, we may conclude by \Cref{cor:restinj} that $A$ is Morita equivalent to $X$. 
\end{proof}

We conclude with the following over-optimistic questions. We raise them nonetheless as an invitation to investigate questions of this nature:
\begin{ques}
    On top of the elements coming from $\Br(X)$, $\Pic(\Mot_X)$ also contains suspensions $\Sigma^n \U^X(X), n \in\Z$. Do these elements generate $\Pic(\Mot_X)$ ? Do they at least generate the picard group of the thick subcategory of $\Mot_X$ generated by motives of smooth proper $X$-schemes ? 
\end{ques}
\begin{ques}
    If we change the base $\Perf(X)/\QCoh(X)$ to something less geometric, where we do not necessarily have access to a local analysis of $\BBr$, what can we say about injectivity ? That is, let $\mathcal{V}\in\CAlg(\PrL_\st)$, and let $\Br(\mathcal{V}):= \Pic(\Mod_\mathcal{V}(\PrL_\st))$. How often is the natural map $\U^\mathcal{V}: \Br(\mathcal{V})\to \Pic(\Mot_\mathcal{V})$ injective ? 
\end{ques}
The above map is provably not injective in full generality - one can for example find counterexamples with $\mathcal{V}= \Sh(S^1,\Sp)$.
\begin{ques}
    How often is the map $\Br(X) \to GL_1(K_0^{(2)}(X))$ injective ?
\end{ques}
\begin{rmk}
    Let $\Mot_X^{\mathrm{loc.cell.},\dbl}\subset \Mot_X$ denote the full subcategory spanned by those $X$-linear motives $M$ that are dualizable and such that there is an étale cover $U\to X$ where $M_{\mid U}$ belongs to the thick subcategory generated by $\one_U$. 

    One can adapt the methods of the companion paper \cite{companion} to prove that $H^2_\et(X,\Gm)\subset\Br(X)\to GL_1(K_0(\Mot_X^{\mathrm{loc.cell.},\dbl}))$ is injective, at least if $X$ is regular, or with no assumption if one considers additive motives instead. The target here is a variant of $K_0^{(2)}(X)$, but it is a bit too small and for example does not even contain all smooth projective $X$-schemes, which is why we do not go into the details of this statement. 
\end{rmk}
\bibliographystyle{alpha}
\bibliography{Biblio.bib}

\end{document}